\begin{document}

\begin{frontmatter}

\titledata{Betwixt and between 2-factor Hamiltonian and Perfect-Matching-Hamiltonian graphs}{}           

\authordata{Federico Romaniello}
{Dipartimento di Matematica, Informatica ed Economia\\ Universit\`{a} degli Studi della Basilicata, Italy}{federico.romaniello@unibas.it}{}
{}

\authordata{Jean Paul Zerafa}
{St. Edward's College, Triq San Dwardu\\ Birgu (Citt\`{a} Vittoriosa), BRG 9039, Cottonera, Malta; \\ Department of Technology and Entrepreneurship Education\\University of Malta, Malta; \\
Department of Computer Science, Faculty of Mathematics, Physics and Informatics\\ Comenius University, Mlynsk\'{a} Dolina, 842 48 Bratislava, Slovakia}
{zerafa.jp@gmail.com}
{The author was partially supported by VEGA 1/0743/21, VEGA 1/0727/22, and APVV-19-0308.}

\keywords{Cubic graph, $2$-factor, perfect matching, Hamiltonian cycle}
\msc{05C45, 05C70, 05C76}

\begin{abstract}
A Hamiltonian graph is 2-factor Hamiltonian (2FH) if each of its 2-factors is a Hamiltonian cycle. A similar, but weaker, property is the Perfect-Matching-Hamiltonian property (PMH-property): a graph admitting a perfect matching is said to have this property if each one of its perfect matchings (1-factors) can be extended to a Hamiltonian cycle. It was shown that the \emph{star product} operation between two bipartite 2FH-graphs is necessary and sufficient for a bipartite graph admitting a 3-edge-cut to be 2FH. The same cannot be said when dealing with the PMH-property, and in this work we discuss how one can use star products to obtain graphs (which are not necessarily bipartite, regular and 2FH) admitting the PMH-property with the help of \emph{malleable} vertices, which we introduce here. We show that the presence of a malleable vertex in a graph implies that the graph has the PMH-property, but does not necessarily imply that it is 2FH. 
It was also conjectured that if a graph is a bipartite cubic 2FH-graph, then it can only be obtained from the complete bipartite graph $K_{3,3}$ and the Heawood graph by using star products. Here, we show that a cubic graph (not necessarily bipartite) is 2FH if and only if all of its vertices are malleable. We also prove that the above conjecture is equivalent to saying that, apart from the Heawood graph, every bipartite cyclically 4-edge-connected cubic graph with girth at least 6 having the PMH-property admits a perfect matching which can be extended to a Hamiltonian cycle in exactly one way. Finally, we also give two necessary and sufficient conditions for a graph admitting a 2-edge-cut to be: (i) 2FH, and (ii) PMH.
\end{abstract}

\end{frontmatter}

\section{Introduction}\label{section intro}
Graphs considered in the sequel are connected (unless otherwise stated) and are allowed to have multiedges but no loops. The vertex set and the edge set of a graph $G$ are denoted by $V(G)$ and $E(G)$, respectively, and the number of neighbours of a vertex $v$ is denoted by $\textrm{deg}(v)$. For some integer $t\geq 1$, a \emph{$t$-factor} of a graph $G$ is a $t$-regular spanning subgraph of $G$ (not necessarily connected). In particular, a \emph{perfect matching} of a graph is the edge set of a 1-factor, and a connected 2-factor of a graph is a \emph{Hamiltonian cycle}. For $k\geq 3$, a \emph{cycle} of length $k$ (or a $k$-cycle), denoted by $(v_{1}, \ldots, v_{k})$, is a sequence of mutually distinct vertices $v_1,v_2,\ldots,v_{k}$ with corresponding edge set $\{v_{1}v_{2}, \ldots, v_{k-1}v_{k}, v_{k}v_{1}\}$. For other definitions not explicitly stated here we refer the reader to \cite{BM}. A graph $G$ admitting a perfect matching is said to have the \emph{Perfect-Matching-Hamiltonian property} (for short, the PMH-property) if every perfect matching $M$ of $G$ can be \emph{extended} to a Hamiltonian cycle of $G$, that is, there exists a perfect matching $N$ of $G$ such that $M\cup N$ induces a Hamiltonian cycle of $G$. For simplicity, a graph admitting the PMH-property is said to be PMH or a PMH-graph. This property was introduced in the 1970s by Las Vergnas \cite{LasVergnas} and H\"{a}ggkvist \cite{Haggkvist} and for recent results in this area we suggest the following non-exhaustive list \cite{pmhlinegraphs,papillon,rook,ThomassenEtAl,Fink,cpcq,accordions,Kreweras}.  If we restrict ourselves to the class of $3$-regular graphs (cubic graphs), there is already a known and well-studied class which are naturally PMH (as we shall see in Theorem \ref{theorem 2fh equiv malleable}). This is the class of cubic $2$-factor Hamiltonian graphs. The term \emph{2-factor Hamiltonian} (2FH) was coined by Funk \emph{et al.} in \cite{2fh}, where the authors study Hamiltonian graphs with the property that all their $2$-factors are Hamiltonian. 
In their work, the authors prove that if a graph $G$ is a bipartite $t$-regular 2FH-graph, then $G$ is either a cycle or $t=3$. 

Before proceeding, we define what a star product is. Let $G_{1}$ and $G_{2}$ be two graphs each containing a vertex of degree $3$, say, $v_{1}\in V(G_{1})$ and $v_{2}\in V(G_{2})$. Let $x_{1},y_{1},z_{1}$ be the neighbours of $v_{1}$ in $G_{1}$, and $x_{2},y_{2},z_{2}$ be the neighbours of $v_{2}$ in $G_{2}$. A \emph{star product} on $v_{1}$ and $v_{2}$, denoted by $G_{1}(x_{1}y_{1}z_{1})*G_{2}(x_{2}y_{2}z_{2})$, is a graph operation that consists in constructing the new graph $(G_{1}-v_{1})\cup (G_{2}-v_{2})\cup\{x_{1}x_{2}, y_{1}y_{2}, z_{1}z_{2}\}$. The $3$-edge-cut $\{x_{1}x_{2}, y_{1}y_{2}, z_{1}z_{2}\}$ is referred to as the \emph{principal $3$-edge-cut} of the resulting graph (see for instance \cite{FouquetVanherpe}). Different graphs can be obtained by a star product on $v_{1}$ and $v_{2}$, for example, $(G_{1}-v_{1})\cup (G_{2}-v_{2})\cup\{x_{1}z_{2}, y_{1}y_{2}, z_{1}x_{2}\}$, but, unless otherwise stated, if it is irrelevant how the adjacencies in the principal 3-edge cut look like, we use the notation $G_{1}(v_{1})*G_{2}(v_{2})$ and we say that it is a graph obtained by a star product on $v_{1}$ and $v_{2}$. For simplicity, we shall also say that the resulting graph has been obtained by applying a star product \emph{between} $G_{1}$ and $G_{2}$. Since a star product between a graph $G$ and the unique cubic graph on two vertices results in $G$ itself, in the sequel we shall tacitly assume that when considering a star product between two graphs, neither one of the two graphs is the cubic graph on two vertices.

\begin{proposition}\cite{2fh}\label{prop starproduct 2fh}
Let $G=G_{1}(v_{1})*G_{2}(v_{2})$ be a bipartite graph which is obtained by a star product on $v_{1}\in V(G_{1})$ and $v_{2}\in V(G_{2})$, both of degree $3$. Then, $G$ is 2FH if and only if $G_{1}$ and $G_{2}$ are both 2FH.
\end{proposition}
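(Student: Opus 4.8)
The plan is to match the $2$-factors of $G$ with compatible pairs of $2$-factors of $G_1$ and $G_2$ across the principal $3$-edge-cut $C=\{x_1x_2,y_1y_2,z_1z_2\}$, and to read off Hamiltonicity from this correspondence. The first observation is that any $2$-factor $F$ of $G$ meets $C$ in an even number of edges: summing the degrees of $F$ over the vertices on the $G_1$-side gives $2$ times the number of those vertices, which equals twice the number of edges of $F$ inside this side plus $|F\cap C|$; hence $|F\cap C|\in\{0,2\}$. So the crux is to rule out the value $0$ and then to analyse the case $|F\cap C|=2$.

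Ruling out $0$ is where I would use that $G$ is bipartite. If $F$ used no edge of $C$, then $F$ would restrict to a $2$-factor of $G_1-v_1$ and to a $2$-factor of $G_2-v_2$ separately, and $F$ would be disconnected. Using that $G$ is bipartite together with the degree structure of the ambient (regular) setting, a short parity count on the colour classes shows the three edges of $C$ are incident, on the $G_1$-side, to a single colour class; consequently $G_1$ itself is bipartite with $v_1$ joined to three vertices of one class, and, being $2$FH, $G_1$ has a $2$-factor and hence balanced colour classes. Therefore $G_1-v_1$ has colour classes of unequal size and admits no $2$-factor, and likewise for $G_2-v_2$; thus the case $|F\cap C|=0$ cannot occur and every $2$-factor of $G$ uses exactly two edges of $C$. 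I would emphasise that this parity step is exactly where the degree hypothesis is essential, since it fails for non-regular bipartite graphs.

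With $|F\cap C|=2$ in hand, say $F$ uses $x_1x_2$ and $y_1y_2$ and avoids $z_1z_2$, I would reinsert $v_1$: replacing the two edges of $F$ incident to $x_1,y_1$ that leave the $G_1$-side by $v_1x_1,v_1y_1$ produces a $2$-factor $F_1$ of $G_1$ using $v_1x_1,v_1y_1$; symmetrically one gets $F_2$ of $G_2$. Conversely, two such \emph{compatible} $2$-factors (avoiding the corresponding cut vertex) glue back to a single $2$-factor of $G$. The key connectivity statement is that, since deleting $v_i$ from a Hamiltonian cycle of $G_i$ leaves a Hamiltonian path of $G_i-v_i$ between its two used neighbours, $F$ is a single cycle if and only if both $F_1$ and $F_2$ are: the glued object is an $x_1$--$y_1$ path in $G_1-v_1$ and an $x_2$--$y_2$ path in $G_2-v_2$ joined by the two cut edges, so any cycle of $F_1$ or $F_2$ missing $v_i$ survives as a separate component of $F$. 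This gives the ``if'' direction at once: if $G_1$ and $G_2$ are $2$FH then, for any $2$-factor $F$ of $G$, the induced $F_1$ and $F_2$ are Hamiltonian, hence so is $F$, and $G$ is $2$FH.

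For the ``only if'' direction I would use that $G$ is $2$FH, hence Hamiltonian. Given a $2$-factor $F_1$ of $G_1$, which avoids one cut vertex, say $z_1$, I would extend it to a $2$-factor $F$ of $G$ by gluing it to a compatible $2$-factor $F_2$ of $G_2$ avoiding $z_2$; then $F$ is a $2$-factor of the $2$FH graph $G$, hence Hamiltonian, and by the correspondence $F_1$ (and $F_2$) must be Hamiltonian, proving $G_1$ (and, symmetrically, $G_2$) is $2$FH. The hard part will be the existence of such a \emph{compatible} completion $F_2$, that is, a $2$-factor of $G_2$ avoiding the prescribed vertex $z_2$: a Hamiltonian cycle of $G$ only guarantees a $2$-factor of $G_2$ avoiding \emph{some} cut vertex, so I would invoke the symmetry among the three cut edges together with the bipartite and regular structure of $G$ to secure the avoided vertex I need (equivalently, a perfect matching of $G_2$ through $v_2z_2$). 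I expect this compatibility step, and the exclusion of the zero-cut $2$-factor above, to be the two places where bipartiteness is indispensable: dropping it yields genuine $2$FH factors whose star product is no longer $2$FH.
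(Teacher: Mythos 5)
A preliminary remark: the paper contains no proof of this proposition --- it is quoted from Funk \emph{et al.}~\cite{2fh} --- so your attempt can only be judged on its own merits and against the technique the paper uses for its analogue, Proposition~\ref{prop pmh star product}. Your skeleton (parity forces $|F\cap C|\in\{0,2\}$, then a gluing correspondence in the case $|F\cap C|=2$) is the right one, but the step carrying all the weight fails. You claim that bipartiteness of $G$, \emph{together with regularity}, forces the three cut edges to meet a single colour class on each side, whence $G_1,G_2$ are bipartite and $G_i-v_i$ has no $2$-factor. Regularity is not a hypothesis here (the paper stresses that $G_1,G_2$ need only contain one vertex of degree $3$), and without it the claim is false. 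Concretely, let $G_1$ be the $4$-cycle $(x_1,b_1,y_1,z_1)$ plus a vertex $v_1$ joined to $x_1,y_1,z_1$, and let $G_2$ be a disjoint copy with labels $x_2,b_2,y_2,z_2,v_2$. Each $G_i$ has exactly two $2$-factors, both Hamiltonian $5$-cycles, so each $G_i$ is 2FH (and non-bipartite, as it contains the triangle $(v_i,x_i,z_i)$). The star product with principal cut $\{x_1x_2,y_1y_2,z_1z_2\}$ is bipartite, with colour classes $\{x_1,y_1,b_2,z_2\}$ and $\{b_1,z_1,x_2,y_2\}$, so the cut edges meet \emph{both} classes on each side; and the union of the two $4$-cycles is a $2$-factor of $G$ with two components --- precisely the configuration $|F\cap C|=0$ you needed to exclude. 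So your exclusion step is not merely unproven: it is unprovable from the stated hypotheses, and this example in fact satisfies every printed hypothesis while violating the conclusion. The statement is only tenable with the stronger hypotheses under which it is actually used in \cite{2fh} and throughout this paper, namely $G_1,G_2$ cubic. There your colour-class count does hold ($3(|A\cap V_1|-|B\cap V_1|)=|\partial_A V_1|-|\partial_B V_1|$ forces all three cut edges onto one class), but it is also unnecessary: in the cubic case $|V(G_i)|$ is even by handshake, so $G_i-v_i$ has odd order, and being an induced subgraph of the bipartite graph $G$ it is bipartite, hence has no $2$-factor at all.

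The second gap is the one you flag yourself and then wave away: in the ``only if'' direction, given a $2$-factor $F_1$ of $G_1$ avoiding $v_1z_1$, you need a $2$-factor of $G_2$ avoiding $v_2z_2$ to glue with, and ``invoking the symmetry among the three cut edges'' cannot produce it --- a Hamiltonian cycle of $G$ only yields a $2$-factor of $G_2$ avoiding \emph{some} neighbour of $v_2$, not a prescribed one, and in general a bipartite 2FH-graph need not have a $2$-factor through a prescribed pair of edges at a degree-$3$ vertex (take $C_6$ plus a chord $e$: it is bipartite and 2FH, yet $e$ lies in no $2$-factor, so two of the three pairs at an endvertex of $e$ extend to no $2$-factor). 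The ingredient that closes this step in the cubic setting is Sch\"onberger's theorem \cite{Sch}: since $G$ is Hamiltonian it is $2$-edge-connected, whence $G_2$ is a bridgeless cubic graph, so $v_2z_2$ lies in a perfect matching $M_2$ of $G_2$, and $E(G_2)-M_2$ is a $2$-factor of $G_2$ through $v_2x_2$ and $v_2y_2$, which is exactly the compatible completion you need (the same device also yields the $2$-factor of $G$ required for the ``if'' direction). This is precisely how the paper handles the analogous step in its proof of Proposition~\ref{prop pmh star product}(i); without it, and without restricting to cubic graphs, both halves of your argument remain open.
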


We note that in the above proposition, $G_{1}$ and $G_{2}$ are not necessarily cubic graphs, and only need to admit a vertex of degree $3$ each, denoted above by $v_{1}$ and $v_{2}$, respectively. Moreover, we remark that, in the above proposition, the hypothesis that $G$ is bipartite is needed, because although the complete graph $K_{4}$ is a 2FH-graph, the graph obtained by applying a star product between two copies of $K_{4}$ is not 2FH (and neither PMH).
By using Proposition \ref{prop starproduct 2fh}, the authors construct an infinite family of bipartite cubic 2FH-graphs by taking repeated star products of $K_{3,3}$ and the Heawood graph. For example, for each $i\in\{1,2,3\}$, let $G_i$ be a copy of $K_{3,3}$ or the Heawood graph, and let $v_i\in V(G_i)$. The graph $\left(G_1(v_1)*G_2(v_2)\right)*G_3(v_3)$ is a graph obtained by repeated star products of $K_{3,3}$ and the Heawood graph. In \cite{2fh}, the authors also conjecture that these are the only bipartite cubic 2FH-graph, and this conjecture is still widely open. 

\begin{conjecture}[Funk \emph{et al.}, 2003 \cite{2fh}]\label{conjecture 2fh}
A bipartite cubic 2FH-graph can be obtained from the complete bipartite graph $K_{3,3}$ and the Heawood graph by repeated star products.
\end{conjecture}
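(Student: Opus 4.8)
The plan is to attack Conjecture \ref{conjecture 2fh} by strong induction on $|V(G)|$, using the nontrivial $3$-edge-cuts of a bipartite cubic 2FH-graph $G$ to strip off star-product factors until only indecomposable graphs remain. The first preliminary step is to argue that $G$ is $3$-edge-connected: in a cubic graph a bridge must lie in every perfect matching (a parity count shows the side of the cut not containing a chosen root has odd order, forcing the bridge into $M$), so the complementary $2$-factor is disconnected, contradicting the 2FH-property; the case of a $2$-edge-cut is disposed of separately by a reduction for graphs admitting such a cut. Hence I may assume $G$ is $3$-edge-connected, so that the only graphs which cannot be broken up are the cyclically $4$-edge-connected ones. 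If $G$ is \emph{not} cyclically $4$-edge-connected, it carries a nontrivial $3$-edge-cut and therefore decomposes as $G=G_{1}(v_{1})*G_{2}(v_{2})$, where $G_{1}$ and $G_{2}$ are strictly smaller; Proposition \ref{prop starproduct 2fh} then tells me that both factors are 2FH, and the inductive hypothesis expresses each of them as a repeated star product of $K_{3,3}$ and the Heawood graph, whence so is $G$.

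For the inductive step to run I must verify that the two factors $G_{1}$ and $G_{2}$ are again bipartite cubic graphs. Cubicity is immediate, since the new vertex $v_{i}$ is incident with the three cut edges and every other vertex keeps its degree. Bipartiteness follows from a short parity argument: writing $(S,\bar S)$ for the cut and $A,B$ for the bipartition classes of $G$, if $p$ of the three cut edges meet $S\cap A$ and $3-p$ meet $S\cap B$, then counting edges incident with $S\cap A$ and with $S\cap B$ gives $3(|S\cap A|-|S\cap B|)=2p-3$, which forces $p\equiv 0 \pmod 3$, so $p\in\{0,3\}$. Thus all three cut edges leave $S$ from a single bipartition class, and placing $v_{1}$ in the opposite class keeps $G_{1}$ bipartite (and symmetrically for $G_{2}$). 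This confirms that the induction stays within the class of bipartite cubic 2FH-graphs.

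The genuine difficulty, and the step I expect to be the main obstacle, is the base case: showing that the \emph{only} cyclically $4$-edge-connected bipartite cubic 2FH-graphs are $K_{3,3}$ and the Heawood graph. Here the star-product machinery is exhausted, since such graphs admit no nontrivial $3$-edge-cut to reduce along, and one needs a genuinely structural argument. The route I would pursue is to pass to a smallest counterexample $G$ and hunt for a forbidden or reducible configuration: one would study how perfect matchings of $G$ interact with short cycles and with the cyclic connectivity, exploiting that in a 2FH-graph the complement of every perfect matching is a single Hamiltonian cycle, which is an extremely rigid condition. A promising reformulation is the one recorded in the abstract, namely that the conjecture is equivalent to asserting that every bipartite cyclically $4$-edge-connected cubic graph with the PMH-property other than the Heawood graph has a perfect matching extendable to a Hamiltonian cycle in exactly one way; proving the base case would then amount to ruling out such uniquely-extendable graphs under the stronger 2FH-hypothesis. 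I would regard this last point as where the real work (and the open nature of the problem) lies, the inductive scaffolding above being essentially forced by Proposition \ref{prop starproduct 2fh}.
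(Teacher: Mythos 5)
This statement is a \emph{conjecture}, not a theorem: the paper explicitly records it as the still-open conjecture of Funk \emph{et al.}\ and offers no proof of it; the paper's contribution is only to show (via Theorem \ref{theorem 2fh equiv malleable} and Lemma \ref{lemma exactly one way}) that it is \emph{equivalent} to Conjecture \ref{conjecture new}, a restatement in terms of the PMH-property. Your proposal, by your own admission in its final paragraph, is therefore not a proof. The inductive scaffolding you set up --- reduce along nontrivial $3$-edge-cuts using Proposition \ref{prop starproduct 2fh}, check that the two star-product factors remain bipartite and cubic, and push everything onto the cyclically $4$-edge-connected case --- is correct as far as it goes (your parity argument for bipartiteness of the factors is fine), but it is also already known: the paper cites \cite{m1fcub} and \cite{charm1freg} for exactly this reduction, namely that a smallest counterexample must be cyclically $4$-edge-connected with girth at least $6$, so that it suffices to show the Heawood graph is the only cyclically $4$-edge-connected bipartite cubic 2FH-graph of girth at least $6$.

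The genuine gap is the base case, and it is not a ``base case'' in the usual sense of an induction --- it \emph{is} the conjecture. Classifying the cyclically $4$-edge-connected bipartite cubic 2FH-graphs is precisely the open problem, and your proposal offers no argument there beyond gesturing at a ``hunt for a forbidden or reducible configuration.'' Invoking the equivalence with Conjecture \ref{conjecture new} does not help either: that equivalence (which is what the paper actually proves, through the machinery of malleable vertices and Lemma \ref{lemma exactly one way}) merely translates the unresolved statement into a different unresolved statement; ruling out uniquely-extendable perfect matchings in cyclically $4$-edge-connected bipartite cubic PMH-graphs is exactly as hard as the original conjecture, by construction. So the proposal reduces an open problem to itself and should not be presented as a proof strategy that could be completed by routine work; the honest summary is that no proof of this statement exists, in the paper or elsewhere.
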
	

\section{Malleable vertices}

Let $\partial v$ be the set of edges incident to a vertex $v$.

\begin{definition}
Let $G$ be a graph admitting a perfect matching and let $v$ be a vertex of $G$ having degree $t\geq 2$. The vertex $v$ is said to be \emph{$t$-malleable} (or just \emph{malleable}) if for every perfect matching $M$ of $G$, there exist Hamiltonian cycles $H_{1},\ldots, H_{t-1}$ all extending $M$, such that $\partial v-M\subset \cup_{i=1}^{t-1}E(H_{i})$.
\end{definition}

Therefore, if $G$ admits a $t$-malleable vertex $v$, given a perfect matching $M$ of $G$, there exist $t-1$ distinct Hamiltonian cycles, such that each Hamiltonian cycle extends $M$ and contains a different edge of $\partial v-M$, implying that the $t-1$ Hamiltonian cycles cover all edges incident to  $v$ (since every Hamiltonian cycle contains the edge in $\partial v\cap M$). Moreover, if a graph admits a malleable vertex, then it clearly is PMH. In particular, if $|V(G)|>2$ and $v\in V(G)$ is malleable, then the number of neighbours of $v$ must be equal to $\textrm{deg}(v)$, that is, there cannot be any multiedges incident to $v$. Although the definition of malleable vertices seems quite strong, in even cycles and cubic graphs, the presence of a malleable vertex is equivalent to saying that the graph is 2FH.

\subsection{Even cycles and cubic graphs}\label{section malleable cubic}
A (connected) 2-regular graph admitting a 2-malleable vertex, must be bipartite, otherwise it does not admit a perfect matching. One can easily see that cycles on an even number of vertices are 2FH and all the vertices are 2-malleable. So consider cubic graphs.
\begin{theorem}\label{theorem 2fh equiv malleable}
A cubic graph $G$ is 2FH if and only if $G$ admits a 3-malleable vertex.
\end{theorem}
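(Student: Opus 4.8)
The plan is to prove the two implications separately, with the forward direction being the easy one. Suppose $G$ is cubic and 2FH, and let $v$ be \emph{any} vertex. Given a perfect matching $M$, the complementary $2$-factor $E(G)\setminus M$ is, by the 2FH hypothesis, a single Hamiltonian cycle $C$. First I would $2$-colour the edges of $C$, obtaining its two alternating perfect matchings $A$ and $B$ with $A\cup B=C$. Since $M$ is edge-disjoint from $C$, each of $M\cup A$ and $M\cup B$ is a $2$-factor of $G$, hence again a Hamiltonian cycle by the 2FH hypothesis, and both extend $M$. At $v$ the two edges of $\partial v-M$ are precisely the two edges of $C$ at $v$, and in a proper $2$-colouring of an even cycle adjacent edges get different colours, so one lies in $A$ and the other in $B$. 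Thus $M\cup A$ and $M\cup B$ together cover $\partial v-M$, and $v$ is $3$-malleable. (In fact this argument makes every vertex malleable, which is stronger than what the statement requires.)

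For the converse I would argue by contraposition, showing that if $G$ is not 2FH then no vertex is malleable. The first remark is that a malleable vertex already forces $G$ to be PMH, as noted in the excerpt, since malleability demands Hamiltonian extensions for \emph{every} perfect matching; hence if $G$ fails to be PMH we are done, and we may assume $G$ is PMH but not 2FH. Then some $2$-factor $F$ is disconnected, and since $G$ is PMH the perfect matching $E(G)\setminus F$ extends, which forces every cycle of $F$ to be even. Fix any vertex $v$; I claim it is not malleable.

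The crucial step is \emph{not} to test malleability on the disconnected $2$-factor $F$ directly, since this can fail to produce a contradiction, but to rotate $F$ into a matching whose complement is a single Hamiltonian cycle. Concretely, write $M^{*}=E(G)\setminus F$; by the PMH-property there is a perfect matching $N^{*}\subseteq F$ (one alternating class on each even cycle of $F$) with $C:=M^{*}\cup N^{*}$ a Hamiltonian cycle. Let $P:=F\setminus N^{*}$ be the complementary alternating matching, so that $P$ is a perfect matching with $E(G)\setminus P=C$. Now I would apply malleability at $v$ to $P$: the two edges of $\partial v-P$ are the two edges of $C$ incident to $v$, one in $M^{*}$ and the other, say $b$, in $N^{*}$. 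Since $E(G)\setminus P=C$ is a single even cycle it has exactly two perfect matchings, namely $M^{*}$ and $N^{*}$, so the only Hamiltonian cycle that could extend $P$ while using $b$ is $P\cup N^{*}$. But $P\cup N^{*}=(F\setminus N^{*})\cup N^{*}=F$, which is disconnected and hence not a Hamiltonian cycle. Therefore $b$ lies on no Hamiltonian cycle extending $P$, so $\partial v-P$ cannot be covered and $v$ is not malleable, a contradiction.

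The main obstacle I anticipate is exactly this last realisation: malleability can hold at $v$ for the ``obvious'' matching $E(G)\setminus F$ even when $G$ is not 2FH (the cube $Q_{3}$ is an instructive example where every such obvious matching still behaves well), so the disconnected $2$-factor must first be rotated into a matching $P$ whose complement is a single Hamiltonian cycle admitting an essentially unique Hamiltonian extension. Getting this reduction right, and verifying that the uncoverable edge $b$ at $v$ is forced to recreate the disconnected $2$-factor $F$, is the heart of the argument; the remainder is routine bookkeeping with the two alternating matchings on each even cycle.
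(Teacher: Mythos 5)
Your proposal is correct and follows essentially the same route as the paper: the forward direction is identical (split the complementary Hamiltonian cycle into its two alternating perfect matchings and invoke the 2FH hypothesis twice), and your contrapositive argument for the converse is the paper's direct argument read backwards, with your triple $(M^{*},N^{*},P)$ playing the role of the paper's $(M_{1},M_{2},M_{3})$. Both hinge on the same key observation: the complement of $P$ (the paper's $M_{3}$) is a single even cycle with exactly two perfect matchings, so the only candidate extension through $b$ (the paper's $e_{2}$) is forced to be the complementary $2$-factor of $M^{*}$ (the paper's $\overline{M_{1}}$), which settles the claim in either formulation.
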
 

\begin{proof}
($\Rightarrow$) Let $u$ be a vertex of $G$ and let $M$ be a perfect matching of $G$. Moreover, let $\overline{M}=E(G)-M$, that is, the edge set of the complementary $2$-factor of $M$. Since $G$ is 2FH, $\overline{M}$ gives a Hamiltonian cycle, and since $G$ is of even order, $E(\overline{M})=N_{1}\cup N_{2}$, where $N_{1}$ and $N_{2}$ are edge-disjoint perfect matchings of $G$. Once again, since $G$ is 2FH, $M\cup N_{1}$ and $M\cup N_{2}$ are both Hamiltonian cycles of $G$. Thus, $u$ is a 3-malleable vertex.

($\Leftarrow$) Let $v$ be a 3-malleable vertex of $G$ and let $M_{1}$ be a perfect matching of $G$. We are required to show that $\overline{M_{1}}$ (the edge set of the complementary 2-factor of $M_{1}$) gives a Hamiltonian cycle. Since $G$ contains a 3-malleable vertex, it is PMH, and so there exists a perfect matching $M_{2}$ such that $M_{1}\cup M_{2}$ gives a Hamiltonian cycle of $G$. Let $M_{3}=E(G)-(M_{1}\cup M_{2})$ and let $\partial v=\{e_{1},e_{2},e_{3}\}$, such that $e_{i}\in M_{i}$, for each $i\in\{1,2,3\}$. Since $v$ is 3-malleable, there exists a Hamiltonian cycle of $G$ which extends $M_{3}$ and contains the edge $e_{2}$. Since $M_{1}\cup M_{2}$ forms a Hamiltonian cycle and $(M_{1}\cup M_{2})\cap M_{3}=\emptyset$, the only perfect matching of $G-M_{3}$ containing $e_{2}$ is $M_{2}$, and so $M_{2}\cup M_{3}$ (which is equal to $\overline
{M_{1}}$) forms a Hamiltonian cycle, as required.
\end{proof}

Since the vertex $u$ in the first part of the above proof was arbitrary, the next result clearly follows.

\begin{proposition}\label{prop all vertices are malleable}
Let $G$ be a cubic graph admitting a 3-malleable vertex. Then, $G$ is 2FH and all its vertices are 3-malleable.
\end{proposition}

Consequently, Theorem \ref{theorem 2fh equiv malleable} can be restated as follows: a cubic graph is 2FH if and only if all its vertices are malleable. In other words, either all or none of the vertices of a cubic graph are 3-malleable. Figure \ref{figureQ3notmalleable} depicts a perfect matching of the cube $\mathcal{Q}_{3}$ which can only be extended to a Hamiltonian cycle in exactly one way, and so, there is no vertex in $\mathcal{Q}_{3}$ which is 3-malleable. In fact, the cube is not 2FH (although it is PMH).

\begin{figure}[h]
\centering
\includegraphics[scale=1]{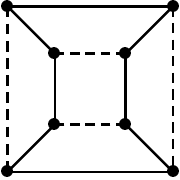}
\caption{$\mathcal{Q}_{3}$ does not admit any 3-malleable vertex since the dashed edges can be extended to a Hamiltonian cycle in exactly one way.}
\label{figureQ3notmalleable}
\end{figure}

In general, if a cubic PMH-graph $G$ (not necessarily bipartite) admits a perfect matching $M$ which extends to a Hamiltonian cycle in exactly one way (that is, there exists a unique perfect matching $N$ for which $M\cup N$ gives a Hamiltonian cycle), then the vertices of $G$ are not malleable, and so the graph is not 2FH (by Theorem \ref{theorem 2fh equiv malleable}). The converse of this statement is also true.

\begin{lemma}\label{lemma exactly one way}
Let $G$ be a cubic PMH-graph (not necessarily bipartite). The graph $G$ is not 2FH if and only if it admits a perfect matching which can be extended to a Hamiltonian cycle in exactly one way.
\end{lemma}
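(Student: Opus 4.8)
The plan is to prove the two implications separately, noting that one of them is exactly the observation recorded in the paragraph just before the statement. For the direction asserting that a graph admitting a perfect matching extendable in exactly one way is not 2FH, I would argue as in that remark: if $M$ is a perfect matching whose unique Hamiltonian extension is $H=M\cup N$, then at any vertex $v$ the cycle $H$ uses the matching edge of $M$ incident to $v$ together with exactly one of the two non-matching edges of $\partial v$, so the other non-matching edge is covered by no Hamiltonian extension of $M$. Hence $v$ fails to be $3$-malleable, and since $v$ is arbitrary, Theorem \ref{theorem 2fh equiv malleable} gives that $G$ is not 2FH.

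For the reverse (and main) implication I would prove the contrapositive: if every perfect matching of $G$ extends to a Hamiltonian cycle in at least two ways, then $G$ is 2FH. Suppose instead that $G$ is not 2FH. Then $G$ has a disconnected $2$-factor, equivalently a perfect matching $M$ whose complementary $2$-factor $\overline{M}$ is not connected. By assumption $M$ has at least two Hamiltonian extensions; fix one, say $H=M\cup N$, and set $M'=E(G)-(M\cup N)=\overline{H}$, which is a perfect matching because $G$ is cubic.

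The crux is then the structure of the extensions of this derived matching $M'$. Since $\overline{M'}=M\cup N=H$ is a Hamiltonian cycle and $G$ has even order (it has a perfect matching), $H$ is an even cycle and therefore has exactly two perfect matchings, namely $M$ and $N$. Consequently every perfect matching disjoint from $M'$ is either $M$ or $N$, so $M'$ has at most two candidate Hamiltonian extensions, $M'\cup M$ and $M'\cup N$. But $M'\cup N=E(G)-M=\overline{M}$ is disconnected, hence not a Hamiltonian cycle, so $N$ is not a valid extension of $M'$. Thus $M'$ extends in at most one way, contradicting the hypothesis. This proves the contrapositive, and reading it back (using that $G$ is PMH, so every perfect matching has at least one extension) yields a perfect matching with exactly one Hamiltonian extension whenever $G$ is not 2FH.

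I expect the main obstacle to be conceptual rather than computational: the natural first attempt is to extract the desired matching directly from a non-$3$-malleable vertex, but a non-malleability witness $M$ may still possess many Hamiltonian extensions (all using the same edge at that vertex), so it need not be extendable in exactly one way. The idea that unlocks the argument is to stop working with $M$ and instead pass to $M'=\overline{M\cup N}$, whose complement is a Hamiltonian cycle; the rigidity coming from a cycle having precisely two perfect matchings, combined with the disconnectedness of $\overline{M}$, pins the number of extensions of $M'$ down to one. I would double-check the parity bookkeeping (so that $H$ is indeed an even cycle with exactly the two perfect matchings $M$ and $N$) and the identities $\overline{M'}=M\cup N$ and $M'\cup N=\overline{M}$, since the whole argument hinges on them.
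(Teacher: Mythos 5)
Your proof is correct, and while the backward direction is exactly the paper's (the remark preceding the lemma, via Theorem \ref{theorem 2fh equiv malleable}), your forward direction takes a genuinely different route. Both arguments rest on the same core trick: starting from a witness perfect matching, pass to the complement $M'$ of one of its Hamiltonian extensions $M\cup N$, note that $E(G)-M'$ is an even Hamiltonian cycle and therefore has exactly two perfect matchings, namely $M$ and $N$, so $M'$ has only the two candidate extensions $M'\cup M$ and $M'\cup N$, and then eliminate one candidate. The difference is in the witness and in why a candidate dies. The paper first invokes Theorem \ref{theorem 2fh equiv malleable} to obtain a non-malleable vertex $v$, chooses $M_1$ witnessing non-malleability (every extension of $M_1$ uses the same edge of $\partial v - M_1$), and eliminates the candidate $M'\cup M_1$ because it would be an extension of $M_1$ through the forbidden edge at $v$. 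You instead take the witness $M$ straight from the definition of not being 2FH --- a perfect matching whose complementary $2$-factor $\overline{M}$ is disconnected --- and eliminate the candidate $M'\cup N$ because it literally equals $\overline{M}$, which is disconnected. Your version is more self-contained and elementary: the forward direction uses no malleability machinery at all, only the definition of 2FH plus PMH (applied twice, once to extend $M$ and once to guarantee $M'$ has at least one extension), and the obstruction is an explicit disconnected $2$-factor rather than a contradiction against a non-malleability witness. What the paper's route buys is cohesion with the malleability framework it develops (the lemma then reads as a statement about how non-malleable vertices manifest themselves), whereas your route would survive even in a treatment that never introduced malleable vertices for this direction; your closing observation --- that a non-malleability witness itself need not extend uniquely, which is why one must pass to the complementary matching $M'$ --- is also accurate and is implicitly the same reason the paper constructs $M_3$ rather than working with $M_1$ directly.
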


\begin{proof}
By the comment prior to the statement of the lemma, it suffices to prove the forward direction. Since $G$ is not 2FH, by Theorem \ref{theorem 2fh equiv malleable}, no vertex in $G$ is malleable. Let $v\in V(G)$ and let $\partial v=\{e_{1},e_{2},e_{3}\}$. Since $v$ is not malleable, there exists a perfect matching of $G$, say $M_{1}$, such that all perfect matchings $M_{2}$ of $G$ for which $M_{1}\cup M_{2}$ is a Hamiltonian cycle, intersect $\partial v - M_{1}$ in the same edge. Let $M_{2}$ be such a perfect matching and, without loss of generality, assume that $e_{1}$ and $e_{2}$ belong to $M_{1}$ and $M_{2}$, respectively. Since $M_{1}\cup M_{2}$ is a Hamiltonian cycle, $E(G)-(M_{1}\cup M_{2})$ is a perfect matching, say $M_{3}$, containing the edge $e_{3}$. Since $G$ is PMH, there exists a perfect matching $N$ of $G-M_{3}$, such that $N\cup M_{3}$ is a Hamiltonian cycle of $G$. Since $G-M_{3}$ is a connected even cycle, the perfect matching $N$ is either equal to $M_{1}$ or to $M_{2}$. By our assumption, $N$ cannot be equal to $M_{1}$, because otherwise there exists a Hamiltonian cycle extending $M_{1}$ which contains the edge $e_{3}$. Therefore, $N$ must be equal to $M_{2}$. Consequently, $M_{3}$ is a perfect matching of $G$ which can be extended to a Hamiltonian cycle of $G$ in exactly one way.
\end{proof}

Before proceeding, the following notions dealing with the cyclic connectivity of a graph require defining. An edge-cut $X$ is said to be \emph{cycle-separating} if at least two components of $G-X$ contain cycles. A (connected) graph $G$ is said to be cyclically $k$-edge-connected if $G$ admits no set with less than $k$ edges which is cycle-separating. Consider once again Conjecture \ref{conjecture 2fh}. As stated in \cite{2fh}, a smallest counterexample to this conjecture must be cyclically 4-edge-connected (see \cite{m1fcub}), and such a counterexample must have girth at least 6 (see \cite{charm1freg}). The authors of \cite{2fh} state that to prove this conjecture it suffices to show that the Heawood graph is the only bipartite cyclically 4-edge-connected cubic 2FH-graph of girth at least 6. However, thinking about cubic 2FH-graphs through malleable vertices and Lemma \ref{lemma exactly one way} suggests another way how one can look at Conjecture \ref{conjecture 2fh}. In fact, a smallest counterexample to this conjecture can be searched for in the class of bipartite cubic PMH-graphs (recall that Conjecture \ref{conjecture 2fh} deals with bipartite cubic graphs). By Lemma \ref{lemma exactly one way}, Conjecture \ref{conjecture 2fh} of Funk \emph{et al.} can be restated equivalently in terms of a strictly weaker property than 2-factor Hamiltonicity: the PMH-property.

\begin{conjecture}\label{conjecture new}
Every bipartite cyclically 4-edge-connected cubic PMH-graph with girth at least 6, except the Heawood graph, admits a perfect matching which can be extended to a Hamiltonian cycle in exactly one way.
\end{conjecture}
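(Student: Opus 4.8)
The plan is first to translate the statement into the language of the paper and to recognise what is really being asked. By Lemma~\ref{lemma exactly one way}, a cubic PMH-graph admits a perfect matching that extends to a Hamiltonian cycle in exactly one way precisely when it is not 2FH, and by Theorem~\ref{theorem 2fh equiv malleable} together with Corollary~\ref{cor all vertices are malleable} a cubic graph is 2FH exactly when all (equivalently, at least one) of its vertices are 3-malleable. Hence the conjecture asserts that, among bipartite cyclically 4-edge-connected cubic PMH-graphs of girth at least $6$, the Heawood graph is the only 2FH-graph, that is, the only one possessing a malleable vertex. Since a cubic 2FH-graph is automatically PMH, this is exactly the assertion that the Heawood graph is the unique bipartite cyclically 4-edge-connected cubic 2FH-graph of girth at least $6$; by the reductions recalled before the statement (a smallest counterexample to Conjecture~\ref{conjecture 2fh} is cyclically 4-edge-connected of girth at least $6$) this in turn resolves Conjecture~\ref{conjecture 2fh}. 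So any complete proof is a solution of a problem open since 2003, and what follows is the program I would pursue rather than a finished argument.

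Because the target graph $G$ is cyclically 4-edge-connected, it has no cycle-separating 3-edge-cut and therefore cannot be written as a nontrivial star product; the decomposition machinery of Proposition~\ref{prop starproduct 2fh} is thus unavailable, and one must reason about $G$ as an indecomposable object. I would argue by contradiction: suppose $G$ is a bipartite, cyclically 4-edge-connected, cubic 2FH-graph of girth $g\geq 6$ with $G\not\cong$ Heawood, so that $|V(G)|>14$. The most useful reformulation of the 2FH-property here is that every $2$-factor of $G$ is the union of two perfect matchings and is the complement of a third, so each $2$-factor determines a $1$-factorization $\{M_{1},M_{2},M_{3}\}$ whose three pairwise unions are $2$-factors; being 2FH means that, for \emph{every} $1$-factorization, all three pairwise unions are Hamiltonian cycles. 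This uniform constraint on the whole set of $1$-factorizations is the algebraic handle I would try to exploit.

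The concrete attack would combine this with the local rigidity forced by girth and cyclic connectivity. Fixing a shortest cycle $C$ of length $g$ and a vertex $v$ on $C$ with $\partial v=\{e_{1},e_{2},e_{3}\}$, I would try to build a perfect matching $M$ containing a prescribed edge at $v$ for which no Hamiltonian completion can use a designated second edge, the obstruction being that such a completion would be forced to close a cycle too short to be Hamiltonian (using that $g\geq 6$ and that cyclic 4-edge-connectivity forbids the small cuts that would otherwise reroute the closure). Producing even a single such non-malleable vertex contradicts Corollary~\ref{cor all vertices are malleable} and completes the proof. Failing a direct local argument, the alternative I would pursue is an extremal one: use the $1$-factorization constraint and a discharging or counting argument over the short cycles of $G$ to bound $|V(G)|$ from above, reducing the conjecture to a finite search that should return only the Heawood graph; the known classifications of $2$-factor isomorphic and pseudo $2$-factor isomorphic bipartite cubic graphs would guide, and partially shortcut, this search.

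The hard part, and the reason this has resisted resolution, is precisely the local-to-global gap in the previous paragraph: non-malleability of $v$ is a statement about \emph{all} Hamiltonian extensions of a single matching, and a second edge at $v$ that looks forbidden from the local picture around $C$ may well be rescued by a long detour through the rest of $G$. Ruling out every such global rerouting with only the hypotheses of girth at least $6$ and cyclic 4-edge-connectivity is exactly the inherently non-local difficulty of Hamiltonicity, and I expect that controlling it --- most plausibly through the rigidity of the full $1$-factorization lattice rather than through any purely neighbourhood-based analysis --- is where the essential new idea will be required.
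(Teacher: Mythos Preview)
The statement is a \emph{conjecture}, and the paper does not prove it. What the paper establishes is only the equivalence with Conjecture~\ref{conjecture 2fh}: by Lemma~\ref{lemma exactly one way}, a cubic PMH-graph fails to be 2FH exactly when it has a perfect matching extending to a Hamiltonian cycle in a unique way, so Conjecture~\ref{conjecture new} says precisely that the Heawood graph is the only bipartite cyclically 4-edge-connected cubic 2FH-graph of girth at least $6$, which (via the reductions cited just before the conjecture) is equivalent to Conjecture~\ref{conjecture 2fh}. Your first paragraph reproduces this equivalence argument correctly and in essentially the same way as the paper; you also correctly flag that a full proof would resolve a problem open since 2003.

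Everything from your second paragraph onward is a speculative program that goes well beyond anything the paper attempts. There is no gap to point to, because you are explicit that it is not a finished argument; but you should be aware that the paper offers no proof of the conjecture at all, so there is nothing to compare your attack against. In short: your reformulation matches the paper exactly, and the remainder is honest speculation on an open problem rather than a proof proposal in the usual sense.
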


\subsection{Non-cubic graphs admitting a malleable vertex}

Even though Section \ref{section malleable cubic} may suggest otherwise, the existence of a malleable vertex in a graph does not necessarily imply that the graph is 2FH. In fact, we note that for every $t>3$, there exists a bipartite $t$-regular graph whose vertices are all $t$-malleable, but the graph itself is not 2FH (recall that in \cite{2fh} it was shown that there are no bipartite $t$-regular 2FH-graphs for $t>3$). Consider, for example, the complete bipartite graphs $K_{t,t}$ for every $t>3$. Also, for every odd $t>3$, the vertices of the complete graph $K_{t+1}$ are all $t$-malleable, but the graph is not 2FH.

Graphs admitting a malleable vertex which are not 2FH are not necessarily regular. In fact, consider the graph $\mathcal{Y}_{2n+1}$ obtained by adding a new vertex $v_{0}$ to the complete graph $K_{2n+1}$, for some $n\geq 2$, such that $v_{0}$ is adjacent to exactly three vertices of $K_{2n+1}$ (see Figure \ref{figure k5+v}).
\begin{proposition}
The graph $\mathcal{Y}_{2n+1}$ is PMH but not 2FH. Moreover, the vertex $v_0$ is 3-malleable.
\end{proposition}
\begin{proof}
Let $V(K_{2n+1})=\{v_{1},\ldots, v_{2n+1}\}$ and, without loss of generality, let the neighbours of $v_{0}$ in $\mathcal{Y}_{2n+1}$ be $v_{1},v_{2},v_{3}$. Then, the two disjoint cycles $(v_{0}, v_{1}, v_{2})$ and $(v_{3},v_{4},\ldots,\linebreak v_{2n+1})$ form a 2-factor, making the graph not 2FH. We also claim that the vertex $v_{0}$ is a 3-malleable vertex. In fact, let $M$ be a perfect matching of $\mathcal{Y}_{2n+1}$ and, without loss of generality, assume that $v_{0}v_{1}\in M$. If one can show that $\mathcal{Y}_{2n+1}-v_{0}v_{2}$ and $\mathcal{Y}_{2n+1}-v_{0}v_{3}$ each admit a Hamiltonian cycle extending $M$, then this would imply that $v_{0}$ is a 3-malleable vertex. Without loss of generality, consider $\mathcal{Y}_{2n+1}-v_{0}v_{2}$. Since $\mathcal{Y}_{2n+1}-v_{0}v_{2}$ contains a copy of the complete graph $K_{2n+1}$, there exists a Hamiltonian path of $\mathcal{Y}_{2n+1}-\{v_{0}\}$ with endvertices $v_1$ and $v_3$ which contains all the edges of $M-\{v_0v_1\}$. This latter path together with the edges $v_0v_1$ and $v_0v_3$ gives a Hamiltonian cycle of $\mathcal{Y}_{2n+1}-v_{0}v_{2}$ extending $M$. By a similar reasoning, $\mathcal{Y}_{2n+1}-v_{0}v_{3}$ admits a Hamiltonian cycle extending $M$. Since $M$ was arbitrary, the vertex $v_{0}$ is 3-malleable.
\end{proof}



\begin{figure}[h]
\centering
\includegraphics[scale=1]{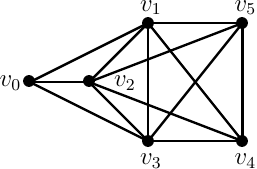}
\caption{The graph $\mathcal{Y}_{5}$.}
\label{figure k5+v}
\end{figure}


The above construction also provides us with graphs which are not 2FH, admit a malleable vertex, but not all of its vertices are as such (unlike even cycles and cubic graphs). In fact, let $M$ be a perfect matching of $\mathcal{Y}_{2n+1}$ containing the edges $v_{0}v_{1}$ and $v_{2}v_{3}$. Any Hamiltonian cycle of $\mathcal{Y}_{2n+1}$ extending $M$ cannot contain $v_{1}v_{2}$ or $v_{1}v_{3}$, and so, in particular, the vertex $v_{1}$ is not $(2n+1)$-malleable.

\begin{figure}[h]
\centering
\includegraphics[scale=1]{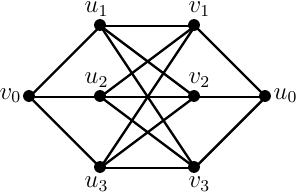}
\caption{The graph $\mathcal{B}_{3}$.}
\label{figure k33+2}
\end{figure}

We can also obtain examples of graphs which are not 2FH and admit a malleable vertex which are bipartite and less dense (with respect to the number of edges) in the following way. For every $n\geq 3$, let $\mathcal{B}_{n}$ be the bipartite graph with partite sets $\{u_{0}, u_{1}, \ldots, u_{n}\}$ and $\{v_{0}, v_{1}, \ldots, v_{n}\}$, such that $E(\mathcal{B}_{n})=\{u_{0}v_{1}, u_{0}v_{2}, u_{0}v_{3}\}\cup\{v_{0}u_{1}, v_{0}u_{2}, v_{0}u_{3}\}\cup\{u_{i}v_{j}: \textrm{ for any }i,j\in[n]\}$, where $[n]=\{1,\ldots, n\}$.

\begin{proposition}
For every $n\geq 3$, the graph $\mathcal{B}_{n}$ is PMH but not 2FH. Moreover, the vertices $u_{0}$ and $v_{0}$ are 3-malleable.    
\end{proposition}

\begin{proof}
Consider the  cycles: 
\begin{enumerate}[(i)]
\item $(u_{0},v_{1},u_{2},v_{2})$ and $(v_{0},u_{1},v_{3},u_{3})$, when $n=3$; and
\item $(u_{0},v_{1},u_{2},v_{2})$ and $(v_{0}, u_{1}, v_{3}, u_{4}, v_{4}, \ldots, v_{n}, u_{3})$, when $n>3$.
\end{enumerate}
In each case a disconnected 2-factor of $\mathcal{B}_{n}$ is formed, where, in particular, $v_{4}$ is followed by $u_{5}$ when $n>4$. Consequently, $\mathcal{B}_{n}$ is not 2FH. Next, we show that $u_{0}$ and $v_{0}$ are 3-malleable, which implies that $\mathcal{B}_{n}$ is PMH, for every $n\geq 3$. Let $M$ be a perfect matching of $\mathcal{B}_{n}$, and without loss of generality, assume that $\{u_{0}v_{1}, v_{0}u_{1}\}\subset M$. Due to the symmetry of $\mathcal{B}_{n}$, without loss of generality, we can further assume that exactly one of the following occurs:
\begin{enumerate}[(i)]
\item $\{u_{2}v_{2}, u_{3}v_{3}\}\subset M$;
\item $u_{2}v_{2}\in M$ and $u_{3}v_{3}\not\in M$; and
\item $u_{2}v_{2}, u_{2}v_{3}, u_{3}v_{3}, u_{3}v_{2}$ do not belong to $M$.
\end{enumerate}
We note that the last two instances only occur when $n>3$. Let $M'=M-\{u_{0}v_{1}, v_{0}u_{1}\}$. The graph $\mathcal{B}'=\mathcal{B}_{n}-\{u_{0},u_{1},v_{0},v_{1}\}$ is isomorphic to the complete bipartite graph $K_{n-1,n-1}$ and $M'$ is one of its perfect matchings. Since every vertex in $\mathcal{B}'$ is $(n-1)$-malleable, there exist Hamiltonian cycles $H_{1}'$ and $H_{2}'$ of $\mathcal{B}'$, both extending $M'$ in such a way that $u_{2}v_{3}\in E(H_{1}')$ and $u_{3}v_{2}\in E(H_{2}')$. 

In the first case, the following set of edges gives a Hamiltonian cycle of $\mathcal{B}_{n}$ which extends $M$ and contains $u_{2}v_{0}$ and $u_{0}v_{3}$: $\left (E(H_{1}')-\{u_{2}v_{3}\}\right )\cup \{u_{2}v_{0}, v_{0}u_{1}, u_{1}v_{1}, v_{1}u_{0}, u_{0}v_{3}\}$. In the second case, $\left (E(H_{2}')-\{u_{3}v_{2}\}\right )\cup \{u_{3}v_{0}, v_{0}u_{1}, u_{1}v_{1}, v_{1}u_{0}, u_{0}v_{2}\}$ is the edge set of a Hamiltonian cycle of $\mathcal{B}_{n}$ which extends $M$ and contains $u_{3}v_{0}$  and $u_{0}v_{2}$. Hence, $u_{0}$ and $v_{0}$ are both 3-malleable. 
\end{proof}

We note that the 3-malleability of $u_{0}$ and $v_{0}$ cannot be proved by using Las Vergnas' Theorem \cite{LasVergnas}, and that Theorem 2 in \cite{Yang} can only be used for the cases when $n=3$ or $4$. Furthermore, the graph $\mathcal{B}_{n}$ can be turned into a non-bipartite non-regular graph which is not 2FH and admits a 3-malleable vertex by adding the edge $v_{n-1}v_{n}$.

Finally, we also remark that if a graph is 2FH, it does not mean that all its vertices are malleable (as in the case of even cycles and cubic graphs). An example of such a graph is $K_{3,3}$ with an edge $e$ added between two vertices of the same partite set---we denote this graph by $K_{3,3}+e$. Since there is no perfect matching of $K_{3,3}+e$ which contains $e$, the graph $K_{3,3}+e$ is 2FH (since $K_{3,3}$ is 2FH). However, the endvertices of the edge $e$ are not 4-malleable.
 
The reason why the construction of the graphs $\mathcal{B}_{n}$ and $\mathcal{Y}_{2n+1}$ was given is because the two classes of graphs contain 3-malleable vertices and so can be used in the general results proven in Section \ref{section pmh from smaller} to obtain PMH-graphs with arbitrarily large maximum degree by using star products.

\section{Star products and PMH-graphs}
In this section, we study what happens when we look at star products between PMH-graphs which are not necessarily bipartite and 2FH as in \cite{2fh}. We find general ways how one can obtain PMH-graphs (not necessarily cubic) from smaller graphs by using star products. This is done by the help of malleable vertices. Although there is a clear connection between 2FH-graphs and PMH-graphs, an analogous result to Proposition \ref{prop starproduct 2fh} for PMH-graphs is not possible, as the following section on cubic graphs shows.

\subsection{Cubic graphs revisited}

\begin{proposition}\label{prop pmh star product}
Let $G_{1}$ and $G_{2}$ be two cubic graphs, and let $u\in V(G_{1})$ and $v\in V(G_{2})$.
\begin{enumerate}[(i)]
\item If $G_{1}(u)*G_{2}(v)$ is PMH, then $G_{1}$ and $G_{2}$ are PMH.
\item The converse of (i) is not true.
\end{enumerate}
\end{proposition}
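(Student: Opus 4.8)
The plan is to prove the two parts separately. For part (i), I would establish the contrapositive-style forward implication by a ``lifting'' argument: assuming $G=G_1(u)*G_2(v)$ is PMH, I want to show each factor $G_i$ is PMH. Let $\{x_1x_2, y_1y_2, z_1z_2\}$ be the principal $3$-edge-cut, where $x_1,y_1,z_1$ were the neighbours of $u$ in $G_1$ and $x_2,y_2,z_2$ those of $v$ in $G_2$. Take an arbitrary perfect matching $M_1$ of $G_1$; exactly one edge of $\partial u$, say $ux_1$, lies in $M_1$. The idea is to transport $M_1$ to a perfect matching of $G$ by replacing $ux_1$ with the cut-edge $x_1x_2$ and then completing across the cut in $G_2$. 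Concretely, $M_1-\{ux_1\}$ is a perfect matching of $G_1-u$; I would pick a perfect matching of $G_2-v$ that saturates $x_2$ via the cut edge $x_1x_2$ while covering $y_2,z_2$ internally, which exists precisely because $v$ was a degree-$3$ vertex of $G_2$ (so $G_2-v$ has a perfect matching missing none of $x_2,y_2,z_2$ except the one matched across). Then $M=(M_1-\{ux_1\})\cup\{x_1x_2\}\cup(\text{matching of }G_2-v)$ is a perfect matching of $G$. Since $G$ is PMH, $M$ extends to a Hamiltonian cycle $H$ of $G$. The key structural fact is that a Hamiltonian cycle of a star product must cross the principal $3$-edge-cut in exactly two of its three edges (a cut of odd size is crossed an odd number of times by a cycle, and a cycle crosses a $3$-cut either $0$ or $2$ times, but it must be $2$ to be spanning on both sides). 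Restricting $H$ to $G_1-u$ therefore yields a Hamiltonian path of $G_1-u$ between two of $\{x_1,y_1,z_1\}$, which closes up through $u$ to a Hamiltonian cycle of $G_1$ extending $M_1$. Symmetrically $G_2$ is PMH.

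The main obstacle in part (i) is making the cut-crossing count rigorous and handling which two of the three cut edges $H$ uses, so that the reassembled cycle in $G_1$ genuinely uses the vertex $u$ correctly and extends the original $M_1$. I would state as a lemma (or inline observation) that any $2$-factor of $G$, and in particular any Hamiltonian cycle, meets the principal $3$-edge-cut in an even number of edges, hence in $0$ or $2$; and that $0$ is impossible for a spanning connected subgraph since it would disconnect the two sides. Once the two crossing edges are identified, say $x_1x_2$ and $y_1y_2$, the trace of $H$ on $G_1-u$ is a spanning subgraph in which $x_1$ and $y_1$ have degree $1$ and all other vertices have degree $2$, i.e.\ a Hamiltonian path from $x_1$ to $y_1$; adding the edges $ux_1,uy_1$ recovers a Hamiltonian cycle of $G_1$. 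I would check this cycle contains $M_1$ by construction, since the edges of $M_1$ internal to $G_1-u$ are preserved and $ux_1$ replaces $x_1x_2$.

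For part (ii), I only need a single counterexample, and the excerpt has already handed me the natural candidate. The remark following Proposition~\ref{prop starproduct 2fh} notes that $K_4$ is $2$FH (hence PMH, being cubic and $2$FH), yet the star product of two copies of $K_4$ is neither $2$FH nor PMH. I would therefore take $G_1=G_2=K_4$, which are cubic and PMH, and exhibit a perfect matching of $G_1(u)*G_2(v)$ that cannot be extended to any Hamiltonian cycle. The resulting graph is the $3$-prism (triangular prism) or a closely related cubic graph on six vertices obtained by deleting a vertex from each $K_4$ and joining the two triangles across the principal $3$-edge-cut; I would display the specific perfect matching consisting of the three cut edges $\{x_1x_2,y_1y_2,z_1z_2\}$ and observe that its complementary $2$-factor is the disjoint union of the two triangles, so it is not Hamiltonian, and moreover that no rematching extends this particular perfect matching to a spanning cycle. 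This establishes that the converse fails, completing the proposition. I expect part (ii) to be routine once the correct small example is drawn, with the only care needed being to verify that the chosen adjacency pattern in the principal $3$-edge-cut yields a genuinely non-PMH graph.
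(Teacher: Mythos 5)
Your part (i) is essentially the paper's own argument: transport a perfect matching $M_1$ of $G_1$ into the star product, invoke the PMH-property of $G_1(u)*G_2(v)$, observe that a Hamiltonian cycle meets the principal $3$-edge-cut in exactly two edges (exactly one of which lies in the transported matching), and close the resulting Hamiltonian path of $G_1-u$ through $u$. Part (ii) is where you genuinely diverge: you take $G_1=G_2=K_4$, whose star product is the triangular prism, and the perfect matching formed by the three cut edges cannot lie in any Hamiltonian cycle (each triangle would then have to contribute a perfect matching on its three vertices, impossible by parity). This is correct, and it is in fact the example the paper itself mentions in Section~\ref{section intro}. The paper's proof instead uses two copies of the cube $\mathcal{Q}_{3}$, and the choice is deliberate: the cube is \emph{bipartite} and PMH, so that example establishes the stronger fact, exploited in the discussion immediately after the proof, that even a star product between two \emph{bipartite} PMH-graphs can fail to be PMH, i.e., that Proposition~\ref{prop starproduct 2fh} has no PMH analogue. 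Your $K_4$ example settles the literal statement of (ii) but does not give this bipartite refinement.

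Two points in your part (i) need patching. First, the existence of the transported perfect matching: you need a perfect matching of $G_2-v$ avoiding $x_2$ (equivalently, a perfect matching of $G_2$ containing the edge $vx_2$), and ``because $v$ was a degree-$3$ vertex of $G_2$'' is not a reason---in an arbitrary cubic graph an edge need not lie in any perfect matching, and the graph may fail to have one at all. The paper justifies this step by citing the theorem that every edge of a \emph{bridgeless} cubic graph lies in a perfect matching; bridgelessness is available here because $G$ is PMH, hence Hamiltonian, hence $2$-edge-connected, and contracting the $G_1-u$ side of $G$ (which recovers $G_2$) preserves $2$-edge-connectedness. Second, your parenthetical claim that ``a cut of odd size is crossed an odd number of times by a cycle'' is false: that is the parity rule for perfect matchings, not for cycles; a cycle crosses every edge cut an even number of times. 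You state the correct fact in the next sentence (the crossing number is $0$ or $2$, and $0$ is impossible for a spanning connected subgraph), so this is a self-correcting slip rather than a structural error, but as written the two statements contradict one another and the incorrect one should be deleted.
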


\begin{proof}
(i) First assume that $G_{1}(u)*G_{2}(v)$ is PMH and let $X=\{u_{1}v_{1}, u_{2}v_{2},u_{3}v_{3}\}$ be the principal $3$-edge-cut of $G_{1}(u)*G_{2}(v)$, where $u_{1},u_{2},u_{3}$ are the neighbours of $u$ in $G_{1}$, and $v_{1},v_{2},v_{3}$ are the neighbours of $v$ in $G_{2}$. Let $M$ be a perfect matching of $G_{1}$, and without loss of generality, assume that $u_{1}u\in M$. Let $M'$ be a perfect matching of $G_{1}(u)*G_{2}(v)$ containing $u_{1}v_{1}$ and $M-\{u_{1}u\}$. We remark that such a perfect matching exists, since, in particular, every edge of a bridgeless cubic graph is contained in a perfect matching (see \cite{Sch}). Furthermore, since $G_{1}(u)*G_{2}(v)$ is PMH, $M'$ (and every other perfect matching of this graph) intersects $X$ in exactly one edge, and there exists a Hamiltonian cycle $H$ of $G_{1}(u)*G_{2}(v)$ extending $M'$ and containing exactly one of the edges $u_{2}v_{2}$ and $u_{3}v_{3}$. Assume $u_{2}v_{2}\in E(H)$. This means that $H$ induces a path in $G_{1}$ having end-vertices $u_{1}$ and $u_{2}$, passes through all the vertices in $V(G_{1})-\{u\}$ and contains $M-\{u_{1}u\}$. This path together with the edges $u_{1}u$ and $u_{2}u$ forms a Hamiltonian cycle of $G_{1}$ extending $M$. Hence, $G_{1}$ is PMH, and by a similar reasoning, one can show that $G_{2}$ is also PMH.

(ii) Let $G_{1}$ and $G_{2}$ be two copies of the cube, and let $u\in V(G_{1})$ and $v\in V(G_{2})$. Both $G_{1}$ and $G_{2}$ are PMH (by \cite{Fink}), but $G_{1}(u)*G_{2}(v)$ is not. In fact, consider the perfect matching of $G_{1}(u)*G_{2}(v)$ shown in Figure \ref{figureQ3Q3}. One can clearly see that it cannot be extended to a Hamiltonian cycle.
\end{proof}

\begin{figure}[h]
\centering
\includegraphics[scale=1]{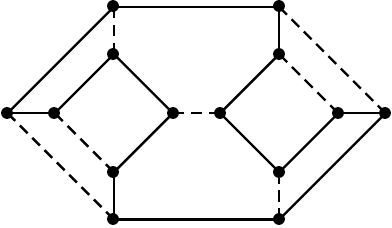}
\caption{A star product between two copies of the cube. The dashed edges cannot be extended to a Hamiltonian cycle.}
\label{figureQ3Q3}
\end{figure}

The second part of the above proof shows that unlike Proposition \ref{prop starproduct 2fh}, a star product between two bipartite PMH-graphs does not guarantee that the resulting graph is PMH. 

\begin{corollary}\label{cor 3cutconn}
If $G$ is a cubic PMH-graph having a $3$-edge-cut, then $G$ can be obtained by an appropriate star product between two cubic PMH-graphs $G_{1}$ and $G_{2}$.
\end{corollary}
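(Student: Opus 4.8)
The plan is to realise $G$ explicitly as a star product of two cubic graphs and then to let Proposition \ref{prop pmh star product}(i) do the remaining work. Indeed, once $G$ is written as $G_{1}(u)*G_{2}(v)$ with $G_{1}$ and $G_{2}$ cubic, part (i) of that proposition immediately gives that both $G_{1}$ and $G_{2}$ are PMH, which is precisely the conclusion. So the entire task reduces to producing such a decomposition from the given $3$-edge-cut, and the word \emph{appropriate} in the statement simply records that the adjacencies in the principal $3$-edge-cut are chosen to reproduce the cut of $G$.

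First I would fix a $3$-edge-cut $X=\{a_{1}b_{1},a_{2}b_{2},a_{3}b_{3}\}$ and let $A$ and $B$ be the vertex sets of the two sides of $G-X$, with $a_{1},a_{2},a_{3}\in A$ and $b_{1},b_{2},b_{3}\in B$. I then define $G_{1}$ to be the graph on $A\cup\{u\}$ obtained from $G[A]$ by adjoining a new vertex $u$ together with the three edges $ua_{1},ua_{2},ua_{3}$, and $G_{2}$ analogously on $B\cup\{v\}$ with a new vertex $v$ adjacent to $b_{1},b_{2},b_{3}$. Every vertex of $A$ retains degree $3$ in $G_{1}$, since deleting the cut edge at $a_{i}$ and replacing it by $ua_{i}$ preserves its degree, while $u$ has degree $3$ by construction; hence $G_{1}$ is cubic, and likewise $G_{2}$. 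By construction $G=G_{1}(u)*G_{2}(v)$, with $X$ serving as the principal $3$-edge-cut, so applying Proposition \ref{prop pmh star product}(i) to $G$ yields that $G_{1}$ and $G_{2}$ are both PMH, as required.

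The step requiring the most care, and the point I would expect to be the main obstacle, is ensuring that the decomposition is non-degenerate, so that Proposition \ref{prop pmh star product}(i) genuinely applies to two bona fide cubic factors. The delicate case is when all three edges of $X$ share a common endpoint, say $a\in A$: then $a$ has all of its edges in the cut, forcing $A=\{a\}$ and making $G_{1}$ the cubic graph on two vertices, which the standing convention excludes as a star-product factor. I would therefore take $X$ to be a non-trivial (cycle-separating) $3$-edge-cut, noting that $G$ is bridgeless since a PMH-graph contains a Hamiltonian cycle and is thus $2$-edge-connected; this guarantees that each of $A$ and $B$ induces a non-trivial graph and that neither $G_{1}$ nor $G_{2}$ collapses to the two-vertex graph. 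Beyond this, one only needs to check the bookkeeping when the $A$-side (or $B$-side) endpoints of $X$ are not pairwise distinct, in which case a multiedge is created at $u$ (or $v$); since the paper permits multiedges and Proposition \ref{prop pmh star product} is stated for arbitrary cubic graphs, this causes no difficulty.
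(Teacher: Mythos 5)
Your proof is correct and takes essentially the same route the paper intends: the corollary is stated without proof precisely because it follows immediately from Proposition \ref{prop pmh star product}(i) applied to the natural decomposition of $G$ along the $3$-edge-cut, which is exactly what you construct. Your attention to the degenerate case (a trivial cut $\partial a$, where one factor collapses to the two-vertex cubic graph excluded by the paper's convention) correctly identifies that the corollary must be read as referring to a non-trivial, i.e.\ cycle-separating, $3$-edge-cut, a point the paper leaves implicit.
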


The above corollary (and also Conjecture \ref{conjecture new}) are the main reasons why in \cite{papillon}, the study of cubic graphs which are PMH or just even-2-factorable was restricted to graphs having girth at least $4$. In \cite{papillon}, a graph $G$ is defined to be \emph{even-2-factorable} (for short E2F) if each of its 2-factors consist only of even cycles. When $G$ is cubic, $G$ is E2F if and only if each of its perfect matchings can be extended to a 3-edge-colouring (see Figure \ref{figure papillon}). We note that if a cubic graph is PMH then it is even-2-factorable as well, but the converse is not necessarily true. 

\begin{figure}[h]
      \centering
      \includegraphics[scale=0.6]{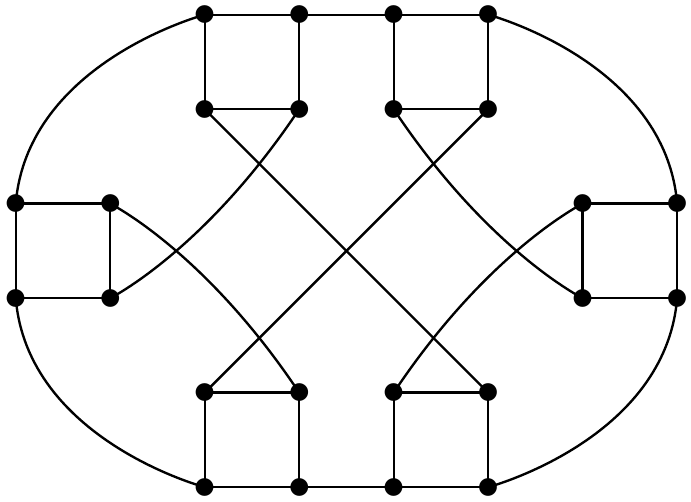}
	  \caption{An example of a papillon graph: an even-2-factorable cubic graph given in \cite{papillon}.}
      \label{figure papillon}
\end{figure}

As in Corollary \ref{cor 3cutconn}, a cubic graph having girth 3 which is also even-2-factorable (not necessarily PMH), can be obtained  by applying a star product between an even-2-factorable cubic graph and the complete graph $K_{4}$ (see \cite{papillon} for more details). Applying a star product between a graph and $K_{4}$ is also known as applying a $Y$-extension, which can be seen as expanding a vertex into a triangle (see Figure \ref{figure Yoperations}). We remark that the results given in the sequel do not necessarily yield PMH-graphs having girth 3, as Remark \ref{remark Girth4+PMH} shows.

\begin{figure}[h]
      \centering
      \includegraphics[scale=1]{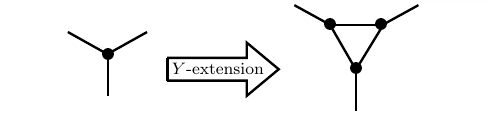}
	  \caption{$Y$-extension.}
      \label{figure Yoperations}
\end{figure}

Despite the discouraging statement of Proposition \ref{prop pmh star product}, one can still obtain PMH-graphs from smaller PMH-graphs by using a star product (or repeated star products) and 3-malleable vertices, as we shall see in the following section.

\subsection{Obtaining PMH-graphs from smaller graphs}\label{section pmh from smaller}

Before proceeding we give the following definition. Following the notation in \cite{lovasz}, an edge-cut in a graph $G$ admitting a perfect matching is said to be \emph{tight} if every perfect matching of $G$ intersects it in exactly one edge (not necessarily the same). 

\begin{lemma}\label{lem G1PMHG2malleable}
Let $G_{1}$ be a  PMH-graph admitting a vertex $u$ of degree 3 and let $G_{2}$ be a graph admitting a 3-malleable vertex $v$. The principal $3$-edge-cut of $G_{1}(u)*G_{2}(v)$ is tight if and only if $G_{1}(u)*G_{2}(v)$ is PMH.
\end{lemma}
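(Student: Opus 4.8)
The plan is to prove both implications by analysing how perfect matchings and Hamiltonian cycles interact with the principal $3$-edge-cut $X=\{x_{1}x_{2}, y_{1}y_{2}, z_{1}z_{2}\}$, where $x_{1},y_{1},z_{1}$ are the neighbours of $u$ in $G_{1}$ and $x_{2},y_{2},z_{2}$ are the neighbours of $v$ in $G_{2}$. The starting observation is a parity fact: since $G_{1}$ (being PMH) has even order, the side $V(G_{1}-u)$ of the cut has odd cardinality, so every perfect matching $M$ of $G_{1}(u)*G_{2}(v)$ must meet $X$ in an odd number of edges, that is, $|M\cap X|\in\{1,3\}$. Hence $X$ is tight precisely when no perfect matching meets $X$ in all three edges, and it is this reformulation that I would work with.

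For the direction ($\Leftarrow$), if $G:=G_{1}(u)*G_{2}(v)$ is PMH I would argue by contradiction: suppose some perfect matching $M$ has $M\cap X=X$, and extend it to a Hamiltonian cycle $H$. Since any cycle crosses any edge-cut an even number of times, $|H\cap X|$ is even; but $X=M\cap X\subseteq H\cap X\subseteq X$ forces $|H\cap X|=3$, a contradiction. Thus $|M\cap X|=1$ for every perfect matching, i.e. $X$ is tight.

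For the harder direction ($\Rightarrow$), assume $X$ is tight and let $M$ be an arbitrary perfect matching of $G$; I must extend it to a Hamiltonian cycle. By tightness $M\cap X$ is a single edge, say $x_{1}x_{2}$ (the other cases being symmetric), so $y_{1},z_{1}$ are matched inside $G_{1}-u$ and $M_{1}:=(M\cap E(G_{1}-u))\cup\{x_{1}u\}$ is a perfect matching of $G_{1}$, while likewise $M_{2}:=(M\cap E(G_{2}-v))\cup\{x_{2}v\}$ is a perfect matching of $G_{2}$. Since $G_{1}$ is PMH, $M_{1}$ extends to a Hamiltonian cycle $H_{1}$; the two edges of $H_{1}$ at $u$ are $x_{1}u$ and one of $uy_{1},uz_{1}$, say $uy_{1}$, so deleting $u$ turns $H_{1}$ into a Hamiltonian path $P_{1}$ of $G_{1}-u$ from $x_{1}$ to $y_{1}$ containing $M\cap E(G_{1}-u)$. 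The key move is now to exploit the $3$-malleability of $v$: applied to $M_{2}$ (which contains $vx_{2}$), it furnishes a Hamiltonian cycle $H_{2}$ of $G_{2}$ extending $M_{2}$ and containing exactly the edge $vy_{2}$ of $\partial v-M_{2}$ that we need; deleting $v$ gives a Hamiltonian path $P_{2}$ of $G_{2}-v$ from $x_{2}$ to $y_{2}$ containing $M\cap E(G_{2}-v)$. Splicing $P_{1}$ and $P_{2}$ along the two cut edges $x_{1}x_{2}$ and $y_{1}y_{2}$ then yields a Hamiltonian cycle of $G$ containing $M\cap E(G_{1}-u)$, $M\cap E(G_{2}-v)$ and $x_{1}x_{2}$, hence extending $M$.

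The main obstacle, and the reason the hypotheses are asymmetric (PMH on $G_{1}$ but malleability on $G_{2}$), is the coordination of the two exit edges at the cut. The Hamiltonian cycle $H_{1}$ forces a specific second neighbour of $u$ (here $y_{1}$) over which we have no control, and malleability of $v$ is exactly the strength needed to produce, for the same matching, a Hamiltonian cycle of $G_{2}$ leaving through the matching partner $y_{2}$; mere PMH of $G_{2}$ would only supply \emph{some} Hamiltonian cycle, possibly exiting through $z_{2}$, which could not be glued to $P_{1}$. This is precisely the phenomenon behind the failed cube--cube example in Proposition \ref{prop pmh star product}, where both factors are PMH but neither admits a malleable vertex.
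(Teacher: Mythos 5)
Your proof is correct and follows essentially the same route as the paper: the forward direction (tightness implies PMH) is identical --- induce $M_{1}$ and $M_{2}$, extend $M_{1}$ by the PMH-property of $G_{1}$, use $3$-malleability of $v$ to force the Hamiltonian cycle of $G_{2}$ to exit through the matching partner dictated by $H_{1}$, and splice along two cut edges. For the reverse direction the paper simply asserts that any $3$-edge-cut in a PMH-graph is tight, whereas you prove it for the principal cut via the parity of its (odd) sides; this is a slightly more careful rendering of the same fact, not a different approach.
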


\begin{proof}
Since $G_1$ and $G_2$ are both PMH-graphs, $G_1-u$ and $G_2-v$ are both of odd order. This implies that a perfect matching of $G_{1}(u)*G_{2}(v)$ cannot intersect its principal $3$-edge-cut in $2$ edges. Hence, if $G_{1}(u)*G_{2}(v)$ is PMH, its principal $3$-edge-cut is tight, as required. Consequently, it suffices to prove the forward direction. Let $X=\{u_{1}v_{1}, u_{2}v_{2},u_{3}v_{3}\}$ be the principal $3$-edge-cut of $G_{1}(u)*G_{2}(v)$, and assume that $X$ is tight. Let $M$ be a perfect matching of $G_{1}(u)*G_{2}(v)$, and let $u_{1},u_{2},u_{3}$ and $v_{1},v_{2},v_{3}$ be the neighbours of $u\in V(G_{1})$ and $v\in V(G_{2})$, respectively. Without loss of generality, assume that $M\cap X=\{u_{1}v_{1}\}$. Consequently, $M$ respectively induces perfect matchings $M_{1}$ and $M_{2}$ in $G_{1}$ and $G_{2}$, such that $u_{1}u\in M_{1}\subset E(G_{1})$, $v_{1}v\in M_{2}\subset E(G_{2})$, $M_{1}-\{u_{1}u\}\subset M$ and $M_{2}-\{v_{1}v\}\subset M$. Since $G_{1}$ is PMH, $M_{1}$ can be extended to a Hamiltonian cycle $H_{1}$ of $G_{1}$. Without loss of generality, we assume that $u_{2}u\in E(H_{1})$. Since $v$ is a 3-malleable vertex, $M_{2}$ can be extended to a Hamiltonian cycle $H_{2}$ of $G_{2}$ whose edge set intersects $v_{2}v$. Consequently, $(E(H_{1})-\{u_{1}u,u_{2}u\})\cup (E(H_{2})-\{v_{1}v,v_{2}v\})\cup \{u_{1}v_{1},u_{2}v_{2}\}$ is a Hamiltonian cycle of $G_{1}(u)*G_{2}(v)$ extending $M$, as required.
\end{proof}
This lemma shall be needed in the sequel when considering star products between PMH-graphs. The graph $G_{1}$ (similarly $G_{2}$) in Lemma \ref{lem G1PMHG2malleable} can either be bipartite or not, and in what follows we shall consider star products in two instances:
\begin{enumerate}[(i)]
\item between PMH-graphs with at least one being non-bipartite (Section \ref{section bipartite}); and
\item between non-bipartite PMH-graphs (Section \ref{section not bipartite}).
\end{enumerate}

We then finish this section with some examples of cubic PMH-graphs having small order (see Section \ref{section examples}).

\subsubsection{Star products between PMH-graphs with at least one being bipartite}\label{section bipartite}
Whilst a star product between two bipartite 2FH-graphs yields a 2FH-graph, Figure \ref{figureQ3Q3} shows that a star product between two bipartite PMH-graphs is not necessarily PMH. The example given in the figure is a star product between two copies of $\mathcal{Q}_{3}$, where the graph $\mathcal{Q}_{3}$ is itself PMH, but does not admit any 3-malleable vertex. The following proposition shows that the presence of a 3-malleable vertex in at least one of the two graphs between which a star product is applied guarantees the PMH-property in the resulting graph, given that at least one of the two initial graphs is bipartite.

\begin{proposition}\label{prop malleable+bip}
Let $G_{1}$ be a PMH-graph admitting a vertex $u$ of degree 3 and let $G_{2}$ be a graph admitting a 3-malleable vertex $v$. If at least one of $G_{1}$ and $G_{2}$ is bipartite, then $G_{1}(u)*G_{2}(v)$ is PMH.
\end{proposition}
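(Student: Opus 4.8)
The plan is to reduce everything to Lemma~\ref{lem G1PMHG2malleable}. Since $G_{2}$ admits a $3$-malleable vertex $v$ and $G_{1}$ is a PMH-graph with a vertex $u$ of degree $3$, that lemma tells us that $G_{1}(u)*G_{2}(v)$ is PMH precisely when its principal $3$-edge-cut is tight. So the whole task becomes showing that the principal $3$-edge-cut $X=\{u_{1}v_{1},u_{2}v_{2},u_{3}v_{3}\}$ is tight, i.e.\ that every perfect matching $M$ of $G_{1}(u)*G_{2}(v)$ satisfies $|M\cap X|=1$. The extra hypothesis---that at least one of $G_{1},G_{2}$ is bipartite---will be used only here, to force $X$ to be tight.

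First I would pin down the parity of $|M\cap X|$. Writing $S$ for the side of the cut coming from $G_{1}-u$, every vertex of $S$ is covered by $M$, and those not covered by a cut edge are matched in pairs inside $S$; hence $|M\cap X|\equiv |S|\pmod 2$. Since $G_{1}$ is PMH it has a perfect matching and therefore even order, so $|S|=|V(G_{1})|-1$ is odd. As $|X|=3$, this leaves only the two possibilities $|M\cap X|\in\{1,3\}$, and tightness is equivalent to excluding the case $|M\cap X|=3$.

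The heart of the argument is ruling out $|M\cap X|=3$ using bipartiteness. Suppose all three cut edges $u_{1}v_{1},u_{2}v_{2},u_{3}v_{3}$ lie in $M$; say $G_{1}$ is the bipartite one, with parts $A$ and $B$ (the case where $G_{2}$ is bipartite is symmetric, replacing $u,u_{i}$ by $v,v_{i}$). Then $u$ lies in one part, say $A$, and its three distinct neighbours $u_{1},u_{2},u_{3}$ all lie in $B$. Removing $u$ together with the three cut edges forces $M$ to match $V(G_{1})-\{u,u_{1},u_{2},u_{3}\}$ internally within $G_{1}-u$; but the $A$-side of this set has size $|A|-1$ while its $B$-side has size $|B|-3$, and since $|A|=|B|$ these cannot be equal. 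This contradiction shows $|M\cap X|\neq 3$, whence $|M\cap X|=1$ for every $M$, so $X$ is tight and Lemma~\ref{lem G1PMHG2malleable} finishes the proof.

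I expect the only delicate point to be the bookkeeping in the bipartite count, together with the mild observation that a multiedge at $u$ (should one occur) already makes $|M\cap X|=3$ impossible, since two of the cut edges would then share an endvertex and could not both lie in a matching. Everything else is a direct appeal to Lemma~\ref{lem G1PMHG2malleable} and the elementary parity count of how a perfect matching meets an edge-cut.
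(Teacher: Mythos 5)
Your proof is correct and takes essentially the same route as the paper: the paper's entire proof is the observation that bipartiteness of one of $G_{1}$, $G_{2}$ makes the principal $3$-edge-cut tight, after which Lemma~\ref{lem G1PMHG2malleable} applies. The only difference is that the paper asserts tightness without justification, while you supply the parity argument and the bipartite counting (including the multiedge edge case) that establish it---a fuller write-up of the same step, not a different approach.
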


\begin{proof}
Since at least one of $G_{1}$ and $G_{2}$ is bipartite, the principal $3$-edge-cut of $G_{1}(u)*G_{2}(v)$ is tight. The result follows by Lemma \ref{lem G1PMHG2malleable}.
\end{proof}

\begin{corollary}\label{cor two bipartite}
Let $G_{1}$ and $G_{2}$ be two bipartite graphs having the PMH-property such that $u$ is a vertex of degree 3 in $G_{1}$ and $v$ is a 3-malleable vertex in $G_{2}$. Then, $G_{1}(u)*G_{2}(v)$ is a bipartite PMH-graph.
\end{corollary}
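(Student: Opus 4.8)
The plan is to derive Corollary \ref{cor two bipartite} directly from Proposition \ref{prop malleable+bip}, since the corollary is essentially a specialisation of the proposition in which both graphs are assumed to be bipartite. First I would observe that the hypotheses of Proposition \ref{prop malleable+bip} are satisfied: $G_{1}$ is a PMH-graph with a vertex $u$ of degree $3$, $G_{2}$ admits a $3$-malleable vertex $v$, and since both $G_{1}$ and $G_{2}$ are bipartite, in particular \emph{at least one} of them is bipartite. Hence Proposition \ref{prop malleable+bip} applies verbatim and yields that $G_{1}(u)*G_{2}(v)$ is PMH. This settles the PMH-property part of the conclusion without any extra work.

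The only genuinely new content to verify is that the resulting graph $G_{1}(u)*G_{2}(v)$ is itself bipartite. The plan here is to check that a star product of two bipartite graphs is bipartite by exhibiting a valid $2$-colouring of the vertices. Recall that the star product deletes $u$ and $v$ and reconnects the three severed neighbours via the principal $3$-edge-cut $\{u_1v_1, u_2v_2, u_3v_3\}$ (or some matching between the $u_i$ and $v_j$). Since $G_{1}$ is bipartite, its bipartition assigns the three neighbours $u_{1},u_{2},u_{3}$ of $u$ all to the same side (the side opposite to $u$); the analogous statement holds for $v_{1},v_{2},v_{3}$ in $G_{2}$. The idea is to colour $G_{1}-u$ with its inherited bipartition and then colour $G_{2}-v$ with its bipartition possibly swapped, choosing the swap so that each new edge $u_iv_j$ joins vertices of opposite colour. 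Because all three $u_i$ share one colour and all three $v_j$ share one colour, a single global choice of orientation for the $G_{2}$-bipartition simultaneously makes all three cut-edges properly bichromatic, so no edge within $G_{1}-u$, within $G_{2}-v$, or across the cut is monochromatic.

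I would write this out by naming the two colour classes of $G_{1}$ as $A_{1}, B_{1}$ with $u \in A_{1}$, so that $u_{1},u_{2},u_{3} \in B_{1}$, and similarly $A_{2}, B_{2}$ for $G_{2}$ with $v \in A_{2}$ and $v_{1},v_{2},v_{3}\in B_{2}$. Then the bipartition of $G_{1}(u)*G_{2}(v)$ is $\bigl((B_{1}\setminus\emptyset)\cup (A_{2}\setminus\{v\})\bigr)$ and $\bigl((A_{1}\setminus\{u\})\cup (B_{2}\setminus\emptyset)\bigr)$; more cleanly, put the $u$-side neighbours together with the $v$-side interior on one colour class and vice versa, so that each cut-edge $u_iv_j$ runs between the two classes. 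The remaining edges keep the colourings inherited from $G_{1}-u$ and $G_{2}-v$, which are proper since deleting a vertex from a bipartite graph preserves bipartiteness.

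I do not anticipate a serious obstacle here: the main subtlety is purely bookkeeping, namely aligning the two bipartitions across the three cut-edges, which works precisely because in each factor the three cut-endpoints lie on a common side of the bipartition. The only point requiring a moment of care is confirming that the particular matching used in the principal $3$-edge-cut (whichever pairing of $u_i$ with $v_j$ is chosen) does not affect bipartiteness—but since all $u_i$ are monochromatic and all $v_j$ are monochromatic, \emph{any} pairing between them is consistent with the same single colour alignment, so the conclusion is independent of the chosen adjacencies in the cut. Combining the bipartiteness just established with the PMH-property obtained from Proposition \ref{prop malleable+bip} gives the full statement.
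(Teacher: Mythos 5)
Your proposal is correct and matches the paper's treatment: the paper presents this as an immediate consequence of Proposition \ref{prop malleable+bip} (both graphs bipartite certainly implies at least one is), with the bipartiteness of the star product of two bipartite graphs left as a routine check, which your colour-class alignment argument carries out correctly. Nothing is missing; you have simply made explicit the bookkeeping the paper omits.
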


We can extend the above corollary further. Let $G_{0}$ be a bipartite PMH-graph admitting $2$ vertices of degree 3, say $u_{1}$ and $u_{2}$. Furthermore, let $G_{1}$ and $G_{2}$ be bipartite PMH-graphs each admitting a 3-malleable vertex, say $v_{1}\in V(G_{1})$ and $v_{2}\in V(G_{2})$. By the previous corollary, $G_{0}(u_{1})*G_{1}(v_{1})$ is PMH. This graph is also bipartite, and so, reapplying a star product on the vertex corresponding to $u_{2}$ in $G_{0}(u_{1})*G_{1}(v_{1})$ and the vertex $v_{2}$ in $G_{2}$ gives a bipartite PMH-graph once again (by Corollary \ref{cor two bipartite}). For simplicity, we shall say that the resulting graph has been obtained by applying a star product on $u_{i}$ and $v_{i}$, for each $i\in\{1,2\}$. By repeating this argument we can state the following more general result.

\begin{theorem}\label{theorem bipartite}
Let $G_{0}$ be a bipartite PMH-graph admitting $t$ vertices of degree 3, for some $t\in\{1,\ldots, |V(G_{0})|\}$, say $u_{1},\ldots, u_{t}$. Let $\mathcal{I}\subseteq\{i:\textrm{deg}(u_{i})=3\}$. For each $i\in\mathcal{I}$, let $G_{i}$ be a bipartite graph admitting a 3-malleable vertex $v_{i}$. The bipartite graph obtained by applying a star product on $u_{i}$ and $v_{i}$, for each $i\in\mathcal{I}$, is PMH.
\end{theorem}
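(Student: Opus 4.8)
The plan is to prove Theorem \ref{theorem bipartite} by induction on $|\mathcal{I}|$, using Corollary \ref{cor two bipartite} as the inductive step. The base case $|\mathcal{I}|=0$ is trivial, since the graph obtained is just $G_0$ itself, which is a bipartite PMH-graph by hypothesis. For the inductive step, I would assume the result holds whenever the index set has size $k$, and consider an index set $\mathcal{I}$ of size $k+1$. Picking any particular index $j\in\mathcal{I}$, I would first apply the star products on $u_i$ and $v_i$ for all $i\in\mathcal{I}\setminus\{j\}$; by the inductive hypothesis this produces a bipartite PMH-graph, call it $G'$.

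The crux of the argument is then to check that the hypotheses of Corollary \ref{cor two bipartite} apply to the final star product on $u_j$ and $v_j$. Two things need verifying here. First, the vertex of $G'$ corresponding to $u_j$ must still be a vertex of degree $3$: this is where I would be careful, and I expect it to be the main obstacle of the write-up. The star products performed so far acted on the vertices corresponding to $u_i$ for $i\neq j$, which are \emph{distinct} vertices of $G_0$ (the $u_1,\ldots,u_t$ are distinct degree-$3$ vertices), and a star product on a vertex $w$ only deletes $w$ and reroutes its three incident edges while leaving all other vertices and their incidences untouched. Hence the copy of $u_j$ survives each preceding star product unchanged and retains its degree $3$ in $G'$. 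Second, $G'$ is bipartite and PMH (from the inductive hypothesis) and $G_j$ is a bipartite graph admitting the $3$-malleable vertex $v_j$, so all the hypotheses of Corollary \ref{cor two bipartite} are met.

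Applying Corollary \ref{cor two bipartite} to $G'(u_j)*G_j(v_j)$ then yields a bipartite PMH-graph, and since this graph is exactly the one obtained by applying a star product on $u_i$ and $v_i$ for every $i\in\mathcal{I}$, the induction closes. One technical point worth a sentence is \emph{order-independence}: I should note that the resulting graph does not depend on the order in which the star products are carried out (the operations act on disjoint local pieces), so that choosing to perform the $j$-th product last is without loss of generality. I would also remark that each $G_j$ is PMH as a consequence of admitting a $3$-malleable vertex, so that Corollary \ref{cor two bipartite}'s requirement that $G_j$ have the PMH-property is automatically satisfied and need not be assumed separately. Beyond these bookkeeping remarks, no nontrivial calculation is required; the entire content is packaged into Corollary \ref{cor two bipartite}, and the theorem is really just its iterated form.
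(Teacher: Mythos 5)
Your proposal is correct and is essentially the paper's own argument: the paper establishes the $|\mathcal{I}|=2$ case by applying Corollary \ref{cor two bipartite} twice (noting that each intermediate graph remains bipartite and PMH, with the next $u_i$ still of degree $3$) and then states that the general result follows ``by repeating this argument,'' which is precisely the induction on $|\mathcal{I}|$ that you spell out. Your additional bookkeeping remarks (order-independence of the star products and the fact that a $3$-malleable vertex already forces $G_j$ to be PMH) are sound and only make explicit what the paper leaves implicit.
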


We remark that Theorem \ref{theorem bipartite} is best possible, in the sense that we cannot exchange the roles of the $u_{i}$s and the $v_{i}$s. In fact, if we assume that the $t$ vertices $u_{1},\ldots, u_{t}$ of $G_{0}$ are 3-malleable, and that, for each $i\in\mathcal{I}$, the graphs $G_{i}$ are PMH-graphs with the vertex $v_{i}$ being just a degree 3 vertex (and not 3-malleable), the same conclusion about the resulting graph cannot be obtained, as the following example in the class of cubic graphs shows.
Let $G_{0}$ be the graph $K_{3,3}$, and let $G_{1}$ and $G_{2}$ be two copies of the graph $\mathcal{Q}_{3}$. Let $u_1$ and $u_2$ be two vertices in $G_{0}$ belonging to the same partite set, and let $v_{1}\in V(G_{1})$ and $v_{2}\in V(G_{2})$. By Corollary \ref{cor two bipartite}, $G_{0}(u_1)*G_{1}(v_{1})$ is PMH. However, reapplying a star product on the vertex corresponding to $u_{2}$ in $G_{0}(u_{1})*G_{1}(v_{1})$ and the vertex $v_{2}$ of $G_{2}$ (that is, the final graph is obtained by applying a star product on $u_{i}$ and $v_{i}$, for each $i\in\{1,2\}$) does not yield a PMH-graph. Indeed, the dashed perfect matching portrayed in Figure \ref{figure k33 q3 q3} cannot be extended to a Hamiltonian cycle.

\begin{figure}[h]
\centering
\includegraphics[scale=1]{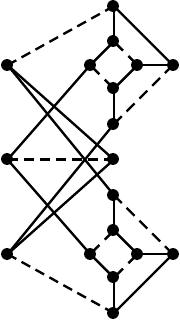}
\caption{The dashed edges cannot be extended to a Hamiltonian cycle. The above graph is $(K_{3,3} * \mathcal{Q}_3) * \mathcal{Q}_3$.}
\label{figure k33 q3 q3}
\end{figure}

 We thus move onto the next section and look at a star product between two non-bipartite PMH-graphs.

\subsubsection{Star products between non-bipartite PMH-graphs}\label{section not bipartite}

In Section \ref{section bipartite}, Proposition \ref{prop malleable+bip} already tells us that a star product between two PMH-graphs with exactly one being bipartite results in a PMH-graph (given that one of them admits a 3-malleable vertex). But what happens when both are non-bipartite? As already stated before, the graph obtained after applying a star product between two copies of the complete graph $K_{4}$ is not PMH, even though the graphs we started with, that is, the two copies of $K_{4}$, are both PMH. Given that $K_{4}$ is also 2FH, the previous example tells us that the presence of 3-malleable vertices alone does not guarantee the PMH-property in the resulting graph when both the PMH-graphs we start with are non-bipartite. In order to attain a general result about PMH-graphs obtained by applying a star product between two non-bipartite PMH-graphs, we extend Proposition \ref{prop malleable+bip} in a similar way as in Theorem \ref{theorem bipartite}.

\begin{theorem}\label{theorem 3ec}
Let $G_{0}$ be a bipartite PMH-graph of order $2n$ and with bipartition $U=\{u_{i}:i\in[n]\}$ and $V=\{v_{i}:i\in[n]\}$, for some $n>1$. Let $\mathcal{I}\subseteq\{i:\textrm{deg}(v_{i})=3\}$. For each $i\in\mathcal{I}$, let $G_{i}$ be a graph admitting a 3-malleable vertex $z_{i}$. The resulting graph $G$ obtained by applying a star product on $v_{i}$ and $z_{i}$, for each $i\in\mathcal{I}$, is PMH.
\end{theorem}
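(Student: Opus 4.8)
The plan is to prove that every principal $3$-edge-cut created by the star products is tight in $G$, and then to build $G$ up one star product at a time, invoking Lemma~\ref{lem G1PMHG2malleable} at each stage. Write $\mathcal{I}=\{i_{1},\dots,i_{r}\}$, set $G^{(0)}=G_{0}$, and for $1\le k\le r$ let $G^{(k)}$ be the graph obtained from $G_{0}$ by applying the star product on $v_{i_{j}}$ and $z_{i_{j}}$ for all $j\le k$, so that $G^{(r)}=G$. The naive idea of iterating Proposition~\ref{prop malleable+bip} fails here because, unlike in Theorem~\ref{theorem bipartite}, the graphs $G_{i}$ need not be bipartite, so $G^{(1)}$ is in general non-bipartite and the bipartite shortcut that guarantees tightness is no longer available. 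The whole point of the hypothesis that \emph{all} attachment vertices $v_{i}$ lie in the single partite set $V$ is that it restores tightness through a direct counting argument.

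First I would establish the following tightness claim for any graph of the form in the statement: if $M$ is a perfect matching of such a graph, then $M$ meets each principal $3$-edge-cut $X_{i}$ in exactly one edge. For each $i\in\mathcal{I}$ put $A_{i}:=V(G_{i})\setminus\{z_{i}\}$; since $G_{i}$ carries a $3$-malleable vertex it has a perfect matching, hence even order, so $|A_{i}|$ is odd. Set $S:=V\setminus\{v_{i}:i\in\mathcal{I}\}$ and $c_{i}:=|M\cap X_{i}|$. Because $G_{0}$ is bipartite and every deleted vertex lies in $V$, in $G$ the vertices of $U$ are adjacent only to vertices of $S$ and to boundary vertices of the inserted blocks $A_{i}$, while the vertices of $S$ are adjacent only to vertices of $U$; in particular there are no $U$--$U$ edges and every edge of $\bigcup_{i}X_{i}$ is incident to a vertex of $U$. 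Counting $S$, each of its vertices is matched into $U$, so the number of $U$--$S$ edges of $M$ equals $|S|=|V|-|\mathcal{I}|=n-r$. Counting $U$, each of its vertices is matched either into $S$ or across exactly one $X_{i}$, whence $n=(n-r)+\sum_{i\in\mathcal{I}}c_{i}$, i.e.\ $\sum_{i\in\mathcal{I}}c_{i}=r$. Finally, since a perfect matching must saturate the odd set $A_{i}$, it crosses $X_{i}$ an odd number of times, so $c_{i}\ge 1$ for every $i$; as the $r$ positive integers $c_{i}$ sum to $r$, each equals $1$, which is exactly tightness of every $X_{i}$.

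With tightness in hand I would finish by induction on $k$, showing that each $G^{(k)}$ is PMH. The base case $G^{(0)}=G_{0}$ holds by hypothesis. For the inductive step, note that $v_{i_{k+1}}$ still has degree $3$ in $G^{(k)}$, being untouched by the earlier products, and that $G^{(k+1)}=G^{(k)}(v_{i_{k+1}})*G_{i_{k+1}}(z_{i_{k+1}})$. Crucially, $G^{(k+1)}$ is \emph{itself} a graph of the form treated in the statement, namely $G_{0}$ with star products performed on the attachment set $\{i_{1},\dots,i_{k+1}\}\subseteq V$; hence the tightness claim applies to $G^{(k+1)}$ and shows in particular that the principal $3$-edge-cut $X_{i_{k+1}}$ of this last star product is tight. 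Since $G^{(k)}$ is PMH by the induction hypothesis (and $G^{(k+1)}$ admits a perfect matching, obtained by extending one of $G^{(k)}$ across the inserted block via a perfect matching of $G_{i_{k+1}}$), while $z_{i_{k+1}}$ is $3$-malleable in $G_{i_{k+1}}$, Lemma~\ref{lem G1PMHG2malleable} yields that $G^{(k+1)}$ is PMH. Taking $k=r$ gives that $G=G^{(r)}$ is PMH.

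The main obstacle is the tightness claim, and within it the only delicate point is the bookkeeping that forces $\sum_{i}c_{i}=r$; this is exactly where the hypothesis that the $v_{i}$ all lie on one side of the bipartition is indispensable, since it guarantees that every cut edge and every edge incident to a surviving $V$-vertex touches $U$, making the two vertex counts directly comparable. One could instead bypass the induction and argue in one shot: tightness lets one read off from $M$ a perfect matching $M_{0}$ of $G_{0}$ and a perfect matching $M_{i}$ of each $G_{i}$, then extend $M_{0}$ to a Hamiltonian cycle of $G_{0}$ by the PMH-property and each $M_{i}$ to a Hamiltonian cycle of $G_{i}$ using the prescribed edge at $z_{i}$ by $3$-malleability, and finally splice these along the tight cuts; splicing a Hamiltonian path into the cycle at each replaced vertex independently keeps the result a single cycle. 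I would nonetheless favour the inductive route, since it offloads all of the stitching to Lemma~\ref{lem G1PMHG2malleable}.
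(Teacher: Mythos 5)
Your proof is correct, but it takes a genuinely different route from the paper's. The paper argues in one shot---essentially the alternative you sketch in your final paragraph: tightness of every $X_{i}$ is asserted in a single sentence from bipartiteness of $G_{0}$; a given perfect matching $M$ of $G$ is then projected to a perfect matching $M_{0}$ of $G_{0}$, which is extended to a Hamiltonian cycle $H_{0}$ by the PMH-property (normalised so that $H_{0}=(u_{1},v_{1},u_{2},\ldots,v_{n})$ and $M_{0}=\{u_{i}v_{i}:i\in[n]\}$); for each $j\in\mathcal{I}$, the $3$-malleability of $z_{j}$ yields a Hamiltonian cycle $H_{j}$ of $G_{j}$ extending $\left(M\cap E(G_{j}-z_{j})\right)\cup\{z_{j}\alpha_{j}\}$ and containing both $z_{j}\alpha_{j}$ and $z_{j}\omega_{j}$, and the paths $P_{j}=H_{j}-z_{j}$ are spliced simultaneously into $H_{0}$. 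Your route instead proves tightness honestly---your two counts of $M$-edges at $U$ and at $S$ giving $\sum_{i}c_{i}=r$, combined with the parity argument on the odd sets $A_{i}$ forcing $c_{i}\geq 1$, constitute a genuine proof of what the paper leaves as an assertion---and then hides all of the splicing inside Lemma~\ref{lem G1PMHG2malleable} via induction. The cost is exactly the two observations you make: that each intermediate graph $G^{(k)}$ is again of the form treated in the statement, so your tightness claim applies to the newest cut (this is where the pairwise non-adjacency of the $v_{i}$'s, i.e.\ bipartiteness of $G_{0}$, is used a second time), and that $G^{(k+1)}$ admits a perfect matching. Only this last parenthetical deserves one more line than you give it: once a perfect matching of $G^{(k)}$ is fixed, the cut pairing dictates \emph{which} edge at $z_{i_{k+1}}$ must be used, so you need that every edge incident to a $3$-malleable vertex lies in some perfect matching; this is true, since if $z_{i_{k+1}}w\notin M'$ for a perfect matching $M'$ of $G_{i_{k+1}}$, then a Hamiltonian cycle extending $M'$ through $z_{i_{k+1}}w$ (which malleability provides) yields a perfect matching containing that edge. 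What each approach buys: the paper's direct construction needs no induction and no statement about intermediate graphs, at the price of heavier index bookkeeping; your version is more modular, reuses the lemma as a black box, and supplies the tightness proof that the paper omits.
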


\begin{proof}
For each $i\in\mathcal{I}$, let $X_{i}$ be the (principal) $3$-edge-cut of $G$ arising from a star product on $v_{i}$ and $z_{i}$. This means that if $e\in X_{i}$, then one of the endvertices of $e$ belongs to $U$ and the other endvertex belongs to $V(G_{i}-z_{i})$. If $\mathcal{I}=\emptyset$, then $G$ is equal to $G_{0}$, and consequently, $G$ is PMH. So we can assume that $\mathcal{I}\neq\emptyset$. Let $M$ be a perfect matching of $G$, and let $E_{\mathcal{I}}$ be the collection of edges in $M$ having one endvertex in $U$ and one endvertex in some $V(G_{i}-z_{i})$, for $i\in\mathcal{I}$. 
Since $G_{0}$ is a bipartite PMH-graph, each $X_{i}$ is tight, and so, $|M\cap X_{i}|=1$ for each $i\in\mathcal{I}$. Consequently, $|E_{\mathcal{I}}|=|\mathcal{I}|$. Let $E_{\mathcal{I}}=\{e_{i}:i\in\mathcal{I}\}$, such that for each $i\in\mathcal{I}$, $e_{i}=x_{i}y_{i}$ for some $x_{i}\in U$ and $y_{i}\in V(G_{i}-z_{i})$. Moreover, for every $i\in\mathcal{I}$, let $f_{i}=x_{i}v_{i}$. The set of edges $M_{0}=\{f_{i}:i\in\mathcal{I}\}\cup M-\left (E_{\mathcal{I}}\bigcup\cup_{i\in\mathcal{I}}E(G_{i}-z_{i})\right )$ is a perfect matching of $G_{0}$, and since $G_{0}$ is PMH, there exists a Hamiltonian cycle $H_{0}$ of $G_{0}$ extending $M_{0}$. Without loss of generality, assume that $H_{0}$ is equal to $(u_{1},v_{1}, u_{2}, \ldots, v_{n})$, where $u_{2}$ is followed by $v_{2}$, and $v_{n}$ is preceded by $u_{n}$. Without loss of generality, assume further that $M_{0}=\{u_{i}v_{i}:i\in[n]\}$. In particular, this implies that for each $i\in\mathcal{I}$, $x_{i}=u_{i}$.

Before continuing, we remark that operations in the indices of the vertices $u_{i}$ are taken modulo $n$, with complete residue system $\{1,\ldots,n\}$. Let $j\in\mathcal{I}$, and let the neighbours of $u_{j}$ and $u_{j+1}$ belonging to $V(G_{j}-z_{j})$ be $\alpha_{j}$ and $\omega_{j}$. We note that $\alpha_{j}$ is equal to what we previously denoted by $y_{j}$. Given that $z_{j}$ is a 3-malleable vertex of $G_{j}$, there exists a Hamiltonian cycle $H_{j}$ of $G_{j}$ extending the perfect matching $\left (M\cap E(G_{j}-z_{j})\right )\cup \{z_{j}\alpha_{j}\}$, such that $z_{j}\alpha_{j}$ and $z_{j}\omega_{j}$ belong to $E(H_{j})$. Let $P_{j}$ be the path obtained after deleting the vertex $z_{j}$ from the cycle $H_{j}$. This process is repeated for every other integer in $\mathcal{I}$. We note that $\alpha_{j}$ and $\omega_{j}$ are the endvertices of $P_{j}$, and, in particular, by our assumption on $M_{0}$, we have $u_{j}\alpha_{j}\in M$, for every $j\in\mathcal{I}$.
By recalling that the edge set of the Hamiltonian cycle $H_{0}$ is $\cup_{i=1}^{n}\{u_{i}v_{i}, v_{i}u_{i+1}\}$ and the above considerations, one can deduce that the following edge set induces a Hamiltonian cycle of $G$ extending $M$: \[\cup_{i\in[n]-\mathcal{I}}\{u_{i}v_{i}, v_{i}u_{i+1}\}\bigcup\cup_{j\in\mathcal{I}}\left(\{u_{j}\alpha_{j}, \omega_{j}u_{j+1}\}\cup E(P_{j})\right ),\] as required.
\end{proof}

When $|\mathcal{I}|>1$, say $|I|=2$, and $G_{1}$ and $G_{2}$ are chosen to be non-bipartite, the above theorem shows that there do exist non-bipartite graphs such that when a star product is applied between them, the resulting graph is PMH. This follows because the graph obtained after appropriately applying a star product between $G_{0}$ and $G_{1}$ is non-bipartite.

We also remark that the reason why we cannot apply a star product on two adjacent vertices in $G_0$ is because the resulting graph is not necessarily PMH, as Figure \ref{figure k33 2 triangles} shows. In fact, applying a $Y$-extension to two adjacent vertices of $K_{3,3}$ results in a graph which does not have the PMH-property. Recall that $Y$-extensions can be explained in terms of a star product between a graph and $K_{4}$.

\begin{figure}[h]
\centering
\includegraphics[scale=1]{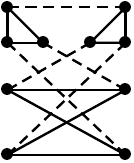}
\caption{The dashed edges cannot be extended to a Hamiltonian cycle.}
\label{figure k33 2 triangles}
\end{figure}

\subsubsection{Examples of cubic PMH-graphs having small order}\label{section examples}

In \cite{papillon}, it is stated that the papillon graph on 8 vertices is the smallest (with respect to the number of vertices) non-bipartite cubic graph with girth at least 4 which is even-2-factorable (E2F) and not PMH (recall that if a cubic graph is PMH then it is E2F). However, in what follows we present other non-bipartite cubic graphs on 4, 6, or 8 vertices which are PMH, and consequently E2F---these have girth strictly less than 4. Apart from $K_{4}$, there is another cubic graph on 4 vertices which is PMH, in particular, let $G$ be the unique bipartite cubic graph on 4 vertices (see Figure \ref{figure cubic bip on 4 vert}). By using the procedure outlined in Theorem \ref{theorem 3ec}, with $G_{0}=G$, $\mathcal{I}=\{1,2\}$ and $G_{1}=G_{2}=K_{4}$, we obtain a non-bipartite cubic PMH-graph on 8 vertices. This is equivalent to applying a $Y$-extension to two vertices belonging to the same partite set of $G$. Note that applying a $Y$-extension to a single vertex in $G$ also gives a PMH-graph, which is the unique non-bipartite cubic PMH-graph on 6 vertices (the second graph in Figure \ref{figure cubic bip on 4 vert}). 

\begin{figure}[h]
\centering
\includegraphics[scale=1]{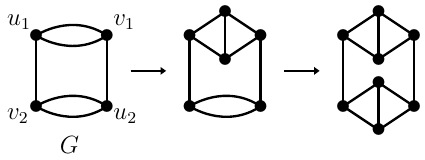}
\caption{Using Theorem \ref{theorem 3ec} to obtain non-bipartite cubic PMH-graphs.}
\label{figure cubic bip on 4 vert}
\end{figure}

Having said this, the graphs obtained by the methods given in the previous sections do not necessarily have girth 3, as the following remark shows.

\begin{remark}\label{remark Girth4+PMH}
An easy way to obtain cubic PMH-graphs with girth at least $4$ is the following. Let $F$ be the graph obtained by applying a $Y$-extension to a bipartite cubic 2FH-graph $F_{0}$ (see, for example, Figure \ref{figurek33k4}). Let $G$ be a bipartite cubic PMH-graph having no multiedges and let $v$ be a vertex of $F$ lying on its triangle. Since $F$ is a cubic 2FH-graph, $v$ is 3-malleable, and so, for any $u\in V(G)$, the graph $G(u)*F(v)$ is PMH by Proposition \ref{prop malleable+bip}. Moreover, $G(u)*F(v)$ has girth $4$. In fact, $G$ and $F_{0}$ do not have any multiedges and, since they are both bipartite, a cycle of length $3$ in $G(u)*F(v)$ can only occur if the edges of the cycle intersect (twice) the principal $3$-edge-cut of $G(u)*F(v)$, which is impossible. 
\begin{figure}[h]
\centering
\includegraphics[scale=1]{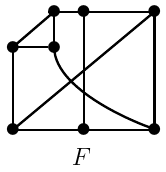}
\caption{Applying a $Y$-extension to $F_0=K_{3,3}$ from Remark \ref{remark Girth4+PMH}.}
\label{figurek33k4}
\end{figure}
Graphs obtained using this method are not necessarily 2FH. In fact, by letting $G=Q_{3}$ and $F_{0}=K_{3,3}$, the resulting graph depicted in Figure \ref{figure Girth4NOT2FH} is not 2FH, since the complementary 2-factor of the dashed perfect matching does not form a Hamiltonian cycle. 
\end{remark}

\begin{figure}[h]
\centering
\includegraphics[scale=1]{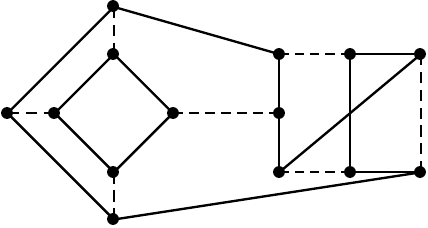}
\caption{A non-bipartite cubic PMH-graph having girth $4$ which is not 2FH.}
\label{figure Girth4NOT2FH}
\end{figure}

Although the above examples are cubic, we recall that the results in Section \ref{section pmh from smaller} can generate PMH-graphs (both bipartite and non-bipartite) with arbitrarily large maximum degree. This can be done by using graphs admitting a 3-malleable vertex as the ones portrayed after the proof of Theorem \ref{theorem 2fh equiv malleable} in Section \ref{section intro}. 

We also remark that despite the encouraging general methods obtained above, there are PMH-graphs admitting a 3-edge-cut, that is, obtained by using a star product (see Corollary \ref{cor 3cutconn}), which cannot be described by the methods portrayed so far. Such an example is given in Figure \ref{figure final example}. The graph denoted by $G_{1}*G_{2}$ (obtained by an appropriate star product on $u\in V(G_{1})$ and $v\in V(G_{2})$) is PMH. The graphs $G_{1}$ (bipartite) and $G_{2}$ (non-bipartite) are both PMH-graphs as well, however, $G_{1}$ and $G_{2}$ do not have any 3-malleable vertices and so, the reason why the resultant graph is PMH is not because of the above results, in particular, Theorem \ref{theorem 3ec} (see also Lemma \ref{lem G1PMHG2malleable}).
\begin{figure}[h]
\centering
\includegraphics[scale=1]{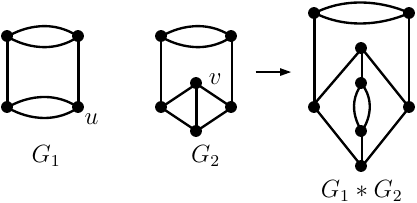}
\caption{The graphs $G_{1}$, $G_{2}$ and $G_{1}*G_{2}$ are all PMH.}
\label{figure final example}
\end{figure}
The three graphs given in Figure \ref{figure final example} contain 2-edge-cuts which we discuss in the next section with regards to PMH- and 2FH-graphs. In particular, although the PMH-property in the third graph given in Figure \ref{figure final example} cannot be explained by the previous theorems dealing with the star product, the reason behind it being PMH can be explained by Theorem \ref{theorem 2ecpmh} which gives a necessary and sufficient condition for a graph admitting a 2-edge-cut to be PMH. 

\section{2-edge-cuts in PMH- and 2FH-graphs}

Let $G_{1}$ and $G_{2}$ be two graphs (not necessarily regular), and let $e_{1}$ and $e_{2}$ be two edges such that $e_{1}=x_{1}y_{1} \in E(G_{1})$ and $e_{2}=x_{2}y_{2} \in E(G_{2})$. A \emph{$2$-cut connection} on $e_{1}$ and $e_{2}$ is a graph operation that consists of constructing the new graph $(G_{1}-e_{1})\cup (G_{2}-e_{2})\cup\{x_{1}x_{2}, y_{1}y_{2}\}$, and denoted by $G_{1}(x_{1}y_{1})\#G_{2}(x_{2}y_{2})$. The $2$-edge-cut $\{x_{1}x_{2}, y_{1}y_{2}\}$ is referred to as the \emph{principal $2$-edge-cut} of the resulting graph. It is clear that another possible graph obtained by a 2-cut connection on $e_{1}$ and $e_2$ is $G_{1}(x_{1}y_{1})\#G_{2}(y_{2}x_{2})$. Unless otherwise stated, if it is not important which of these two graphs is obtained, we use the notation $G_{1}(e_{1})\#G_{2}(e_{2})$ and we say that it is a graph obtained by a $2$-cut connection on $e_{1}$ and $e_{2}$. As in the case of star products, when this occurs, we say that the resulting graph has been obtained by applying a 2-cut connection \emph{between} $G_{1}$ and $G_{2}$. Given that graphs on an odd number of vertices do not admit a perfect matching, they cannot be studied with regards to the PMH-property. Since we shall be looking at the PMH-property of $G_{1}(e_{1})\#G_{2}(e_{2})$, $|V(G_{1})|$ and $|V(G_{2})|$ are either both odd or both even.  This is the reason why in the next theorem we shall assume that the graphs $G_{1}$ and $G_{2}$ are both of even order, as we are not only interested in whether $G_{1}(e_{1})\#G_{2}(e_{2})$ admits the PMH-property, but also whether $G_{1}$ and $G_{2}$ admit it. As we shall see, $G_1$ and $G_2$ will have a stronger property to guarantee the PMH-property in $G_{1}(e_{1})\#G_{2}(e_{2})$.

\begin{theorem}\label{theorem 2ecpmh}
Let $G=G_{1}(e_{1})\#G_{2}(e_{2})$ be a graph obtained by applying a 2-cut connection on $e_{1}\in E(G_{1})$ and $e_{2}\in E(G_{2})$, such that $G_{1}$ and $G_{2}$ both admit a perfect matching. Then, $G$ is PMH if and only if, for each $i\in\{1,2\}$, every perfect matching in $G_{i}$ can be extended to a Hamiltonian cycle of $G_{i}$ which contains $e_{i}$.
\end{theorem}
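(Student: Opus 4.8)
The plan is to exploit the structure imposed by the principal $2$-edge-cut $C=\{x_1x_2,y_1y_2\}$. First I would record two elementary structural facts. Since $|V(G_1)|$ and $|V(G_2)|$ are even and $C$ separates $V(G_1)$ from $V(G_2)$ in $G$, a parity count shows that every perfect matching $M$ of $G$ meets $C$ in an even number of edges, hence in either $0$ or $2$ edges. Call $M$ of \emph{type A} if $M\cap C=\emptyset$ and of \emph{type B} if $C\subseteq M$. A type-A matching restricts to a perfect matching of each $G_i$ avoiding $e_i$, while a type-B matching restricts, after adjoining $e_i$, to a perfect matching of each $G_i$ containing $e_i$. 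Dually, since the only edges of $G$ joining the two sides are those of $C$, every Hamiltonian cycle of $G$ must use both edges of $C$; deleting them splits such a cycle into a Hamiltonian path of $G_i$ from $x_i$ to $y_i$ (not using $e_i$) for $i=1,2$, and adding back $e_i$ turns this path into a Hamiltonian cycle of $G_i$ through $e_i$, and conversely. This sets up the correspondence I would use throughout: a Hamiltonian cycle of $G$ is exactly a pair of Hamiltonian cycles $H_1,H_2$ with $e_i\in E(H_i)$, glued by $H=(E(H_1)-\{e_1\})\cup(E(H_2)-\{e_2\})\cup C$.

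For the direction ($\Leftarrow$), I would take an arbitrary perfect matching $M$ of $G$ and split on its type. If $M$ is type A, its restrictions $M_i=M\cap E(G_i)$ are perfect matchings of $G_i$ avoiding $e_i$; the hypothesis extends each $M_i$ to a Hamiltonian cycle $H_i$ of $G_i$ through $e_i$, and gluing as above gives a Hamiltonian cycle $H$ of $G$ with $M=M_1\cup M_2\subseteq E(H)$ (here one uses $e_i\notin M_i$ so that $M_i\subseteq E(H_i)-\{e_i\}$). If $M$ is type B, I set $M_i=(M\cap E(G_i-e_i))\cup\{e_i\}$, a perfect matching of $G_i$ containing $e_i$; extending each to $H_i$ (which then necessarily contains $e_i$) and gluing again yields $H$ with $M\subseteq E(H)$. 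Either way $M$ extends, so $G$ is PMH.

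For the direction ($\Rightarrow$), suppose $G$ is PMH and fix, say, a perfect matching $M_1$ of $G_1$; I want a Hamiltonian cycle of $G_1$ through $e_1$ containing $M_1$. The key preliminary step is to show that $G_2$ possesses a perfect matching avoiding $e_2$ \emph{and} a perfect matching containing $e_2$, so that I can complete $M_1$ to a perfect matching of $G$ of the appropriate type. To get these I would take any Hamiltonian cycle $H^*$ of $G$ (which exists as $G$ is PMH), restrict it to the Hamiltonian path $w_1\cdots w_{2m}$ of $G_2$ with $w_1=x_2$ and $w_{2m}=y_2$, and read off its two alternating edge classes: the ``odd'' edges form a perfect matching of $G_2$ avoiding $e_2$, while the ``even'' edges form a perfect matching of $G_2-\{x_2,y_2\}$, which together with $e_2$ is a perfect matching of $G_2$ containing $e_2$. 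Now, if $e_1\notin M_1$ I combine $M_1$ with a perfect matching of $G_2$ avoiding $e_2$ to form a type-A perfect matching $M$ of $G$; if $e_1\in M_1$ I combine $M_1-\{e_1\}$ with the two cut edges and a perfect matching of $G_2-\{x_2,y_2\}$ to form a type-B perfect matching $M$ of $G$. Applying PMH to $M$ gives a Hamiltonian cycle $H$ of $G$; since $H$ uses both edges of $C$, its $G_1$-side is a Hamiltonian path from $x_1$ to $y_1$ containing the relevant part of $M_1$, and adding $e_1$ produces the desired Hamiltonian cycle of $G_1$ through $e_1$ containing $M_1$. The statement for $i=2$ follows by symmetry.

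The main obstacle I anticipate is precisely this preliminary step in ($\Rightarrow$): to build a perfect matching of $G$ extending a given $M_1$ one must supply a matching on the $G_2$-side of the matching type forced by whether $e_1\in M_1$, and a priori it is not obvious that $G_2$ admits perfect matchings of both flavours. The alternating-edge extraction from the Hamiltonian path induced by a single Hamiltonian cycle of $G$ is what resolves this cleanly, and it is here that the evenness of $|V(G_2)|$ is essential.
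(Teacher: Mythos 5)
Your proof is correct and takes essentially the same route as the paper's: classify perfect matchings of $G$ by their (necessarily even) intersection with the principal $2$-edge-cut, and use the cut-and-glue correspondence between Hamiltonian cycles of $G$ and pairs of Hamiltonian cycles of $G_{i}$ through $e_{i}$. Your alternating-edge extraction in the forward direction in fact makes explicit a step the paper leaves unjustified, namely that a given perfect matching $M_{1}$ of $G_{1}$ can always be completed to a perfect matching of $G$ (which requires $G_{2}$ to admit perfect matchings both containing and avoiding $e_{2}$).
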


\begin{proof}
($\Rightarrow$) Let $e_{1}=x_{1}y_{1}$ and $e_{2}=x_{2}y_{2}$ be such that the principal 2-edge-cut of $G$ is $X=\{x_{1}x_{2}, y_{1}y_{2}\}$, and let $M_{1}$ be a perfect matching of $G_{1}$. Since $G$ is PMH, $G$ contains a perfect matching $M$, such that $M_{1}-\{e_{1}\}\subset M$, and, in particular, there exists a perfect matching $N$ of $G$ such that $M\cup N$ gives a Hamiltonian cycle of $G$. If $e_{1}\in M_{1}$, then $X\subset M$ and $N\cap X=\emptyset$. Consequently, the set of edges $N_{1}=N\cap E(G_{1})$ is a perfect matching of $G_{1}$, and $M_{1}\cup N_{1}$ gives a Hamiltonian cycle of $G_{1}$ containing $e_{1}$. Otherwise, if $e_{1}\not\in M_{1}$, then $M\cap X=\emptyset$ and $X\subset N$. Consequently, the set of edges $N_{1}=\left (N\cap E(G_{1})\right )\cup \{e_{1}\}$ is a perfect matching of $G_{1}$, and $M_{1}\cup N_{1}$ gives a Hamiltonian cycle of $G_{1}$ containing $e_{1}$, once again. By a similar argument one can show that every perfect matching of $G_{2}$ can be extended to a Hamiltonian cycle of $G_{2}$ containing $e_{2}$.

($\Leftarrow$) Conversely, assume that $M$ is a perfect matching of $G$. Notwithstanding whether $M$ contains the edges in $X=\{x_{1}x_{2}, y_{1}y_{2}\}$ or not, $M$ induces two perfect matchings $M_{1}\in E(G_{1})$ and $M_{2}\in E(G_{2})$ such that $M_{i}-\{e_{i}\}\subset M$, for each $i\in\{1,2\}$. Let $i\in\{1,2\}$. Note that $M\cap X=X$ if and only if $e_{i}\in M_{i}$. By our assumption, $G_{i}$ admits a Hamiltonian cycle $H_{i}$ which extends $M_{i}$ and contains $e_{i}$, and so, since $G_{i}$ is of even order, it admits a perfect matching $N_{i}$ such that $M_{i}\cup N_{i}=E(H_{i})$. Consequently, the edge set $M_{1}\cup N_{1}\cup M_{2}\cup N_{2}\cup X-\{e_{1},e_{2}\}$ gives a Hamiltonian cycle of $G$ containing $M$. Thus, $G$ is PMH as required.
\end{proof}

The above theorem explains the reason behind the PMH-property in all the three graphs shown in Figure \ref{figure final example}, not only the third. The graph $G_{1}$ is obtained by applying a 2-cut connection between two copies of the cubic graph on two vertices. The graph $G_{2}$ is obtained by applying a 2-cut connection between the cubic graph on two vertices and the graph $K_{4}$. The third graph denoted by $G_{1}*G_{2}$ in Figure \ref{figure final example} is obtained by applying an appropriate 2-cut connection between the cubic graph on two vertices and the graph $G_{2}$.

We also note that the condition in Theorem \ref{theorem 2ecpmh} that every perfect matching in $G_{i}$ has to be extended to a Hamiltonian cycle of $G_{i}$ containing the edge $e_{i}$ is required because, for example, a 2-cut connection between the cube $\mathcal{Q}_{3}$ (which is PMH) and any other appropriate PMH-graph does not yield a PMH-graph, since any perfect matching of the resulting graph containing the dashed edges cannot be extended to a Hamiltonian cycle, as can be seen in Figure \ref{figure 2cutq3}. 

\begin{figure}[h]
\centering
\includegraphics[scale=1]{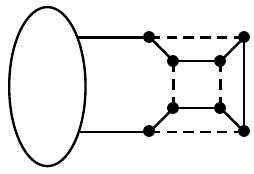}
\caption{A perfect matching containing the dashed edges cannot be extended to a Hamiltonian cycle.}
\label{figure 2cutq3}
\end{figure}

Next, we give a similar result to Theorem \ref{theorem 2ecpmh} but for 2FH-graphs. Before proceeding we note that, in general, if every 2-factor of a graph $G$ is a Hamiltonian cycle containing a particular edge $e\in E(G)$, then every 2-factor of $G$ containing $e$ is a Hamiltonian cycle. However, the converse of this statement is not necessarily true, because $G$ can admit 2-factors which do not contain the edge $e$. An example of such a graph is $K_{3,3}$.

\begin{theorem}\label{theorem 2ec2fh}
Let $G_{1}$ and $G_{2}$ be two Hamiltonian graphs such that $e_{1}\in E(G_{1})$ and $e_{2}\in E(G_{2})$. The graph $G=G_{1}(e_{1})\#G_{2}(e_{2})$ is 2FH if and only if every 2-factor of $G_{1}$ is a Hamiltonian cycle containing $e_{1}$ and every 2-factor of $G_{2}$ containing $e_{2}$ is a Hamiltonian cycle (or vice-versa). 
\end{theorem}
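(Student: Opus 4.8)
The plan is to analyze how 2-factors of $G = G_1(e_1)\#G_2(e_2)$ decompose across the principal $2$-edge-cut $X = \{x_1x_2, y_1y_2\}$, mirroring the structural case-split used in Theorem \ref{theorem 2ecpmh}. The key observation is that any $2$-factor $F$ of $G$ intersects $X$ in either zero or two edges (it cannot intersect in exactly one, since each vertex has even degree in a $2$-factor and $X$ separates $G$ into the $G_1$-side and the $G_2$-side). This dichotomy is the engine of the whole proof: when $F \cap X = \emptyset$, the restriction of $F$ to each side, together with $e_i$ reinserted, yields a $2$-factor $F_i$ of $G_i$ \emph{containing} $e_i$; when $F \cap X = X$, the two edges of $X$ stitch together an $x_i$–$y_i$ path on each side, so adding $e_i$ closes it into a $2$-factor $F_i$ of $G_i$ again containing $e_i$, but crucially whether $F$ is connected now couples the two sides.

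\textbf{Forward direction.} First I would assume $G$ is 2FH and, without loss of generality, aim to show every $2$-factor of $G_1$ is a Hamiltonian cycle through $e_1$, while every $2$-factor of $G_2$ through $e_2$ is Hamiltonian. Given a $2$-factor $F_1$ of $G_1$, I split on whether $e_1 \in F_1$. If $e_1 \notin F_1$, then deleting $e_1$ is vacuous and I want to combine $F_1$ with a suitable $2$-factor of $G_2$ across $X$ with $X \subset F$; here I would invoke that $e_2$ lies in \emph{some} $2$-factor of $G_2$ (which it must, since otherwise $G_2$ would have a $2$-factor avoiding $e_2$ whose combination forces a contradiction, or one argues directly that $G_2$ has the requisite matching structure) to build a $2$-factor $F$ of $G$ with $F \cap X = X$; since $G$ is 2FH, $F$ is Hamiltonian, and tracing the cycle back shows $F_1$ was a Hamiltonian path from $x_1$ to $y_1$ in $G_1$, i.e.\ $F_1 \cup \{e_1\}$ is a Hamiltonian cycle of $G_1$ through $e_1$, a contradiction unless $F_1$ already violated the claim. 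If $e_1 \in F_1$, I instead form $F$ with $F \cap X = \emptyset$ and conclude $F_1$ itself is Hamiltonian and contains $e_1$. Combining both cases gives the stronger conclusion for $G_1$; the weaker conclusion for $G_2$ (only $2$-factors \emph{through} $e_2$ need be Hamiltonian) falls out because any $2$-factor of $G_2$ through $e_2$ can be paired with the reinserted-edge $2$-factor on the $G_1$-side to yield a connected $F$.

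\textbf{Reverse direction.} Assuming the stated condition on $G_1$ and $G_2$, I take an arbitrary $2$-factor $F$ of $G$ and use the zero-or-two dichotomy on $F \cap X$. In the $F\cap X = \emptyset$ case, each side restricts to a $2$-factor $F_i$ of $G_i$ containing $e_i$, hence (by hypothesis on $G_1$, and since $F_2$ contains $e_2$ the hypothesis on $G_2$ applies) each $F_i$ is a Hamiltonian cycle of $G_i$; but then $F$ is a disjoint union of two cycles, contradicting that $F$ could fail to be a single cycle — so I must show this case is impossible or that the asymmetry of the hypotheses rules it out. In the $F \cap X = X$ case, the two cut-edges merge the two Hamiltonian structures into one cycle, giving a Hamiltonian cycle of $G$. \textbf{The main obstacle} is precisely handling the $F\cap X=\emptyset$ case and the built-in asymmetry (the ``or vice-versa'') of the statement: the hypothesis deliberately forces one side ($G_1$) to have \emph{all} its $2$-factors Hamiltonian-through-$e_1$ while the other side ($G_2$) only constrains $2$-factors through $e_2$, and I must verify that this asymmetry is exactly what prevents a disconnected $2$-factor of $G$ — any disconnected $F$ with $F\cap X=\emptyset$ would require a $2$-factor of $G_1$ not using $e_1$ yet Hamiltonian, which the $G_1$-hypothesis forbids by demanding $e_1 \in$ every such cycle. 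Pinning down this equivalence cleanly, and confirming that no parity or connectivity edge-case slips through when $G_1$ or $G_2$ has its own cut structure, is where I expect the real care to be needed.
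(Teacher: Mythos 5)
Your overall skeleton (the zero-or-two dichotomy on the principal cut $X$, gluing and splitting $2$-factors across it, and the disconnected-union contradiction) is the same as the paper's, but the structural claim driving your argument is stated backwards, and this breaks the execution. When a $2$-factor $F$ of $G$ satisfies $F\cap X=\emptyset$, every vertex on the $G_i$-side, \emph{including} $x_i$ and $y_i$, already has degree $2$ in the restriction of $F$; so that restriction is a $2$-factor of $G_i$ \emph{avoiding} $e_i$, and ``reinserting $e_i$'' is impossible (it would give $x_i$ and $y_i$ degree $3$). Only when $F\cap X=X$ does adding $e_i$ to the side restriction produce a $2$-factor of $G_i$, and that one contains $e_i$. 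Consequently, your forward-direction case ``$e_1\notin F_1$'' cannot be handled as proposed: a $2$-factor $F_1$ of $G_1$ avoiding $e_1$ can never sit inside a $2$-factor of $G$ containing $X$; it can only be paired with a $2$-factor of $G_2$ avoiding $e_2$, and the result is then necessarily disconnected. That observation is what the paper proves \emph{first}: since $G$ is 2FH, at most one of $G_1,G_2$ admits a $2$-factor avoiding its distinguished edge. This is what justifies the ``without loss of generality'' labelling (which you invoke before establishing it), and it dictates the correct case split --- on which of the two graphs admits an avoiding $2$-factor, not on whether a given $F_1$ contains $e_1$. You also need the existence of a $2$-factor of $G_i$ through $e_i$ before you can ``pair'' with it; the paper gets this from the fact that $G$, being 2FH, is Hamiltonian, and any Hamiltonian cycle of $G$ must use both edges of $X$, hence restricts to a Hamiltonian path on each side.

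Your reverse direction recovers at the last moment: the closing sentence (a $2$-factor $F$ with $F\cap X=\emptyset$ would induce a $2$-factor of $G_1$ not using $e_1$, which the hypothesis on $G_1$ forbids) is exactly the paper's argument and is correct. But it flatly contradicts what you assert two sentences earlier (``each side restricts to a $2$-factor of $G_i$ containing $e_i$''), so the direction as written is incoherent rather than merely incomplete. Moreover, in the remaining case $F\cap X= X$ you still have to rule out $F$ being disconnected: the side restrictions plus $e_i$ are $2$-factors of $G_i$ which a priori may have components besides the cycle through $e_i$; one must apply the hypothesis on $G_1$ (every $2$-factor is a Hamiltonian cycle through $e_1$) and on $G_2$ (every $2$-factor through $e_2$ is a Hamiltonian cycle) to conclude that both side restrictions are Hamiltonian paths and that $F$ has no further components. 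Your phrase ``the two cut-edges merge the two Hamiltonian structures into one cycle'' presupposes the Hamiltonian structure that this application of the hypotheses is supposed to deliver; the paper's proof makes this step explicit by taking the component $C$ of $F$ through $X$, forming $C_1$ and $C_2$, and deriving a contradiction from whichever of them fails to be Hamiltonian.
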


\begin{proof} 
($\Rightarrow$) Let the principal 2-edge-cut of $G$ be $X$. Since $G$ is 2FH, we cannot have that both $G_{1}$ and $G_{2}$ admit a 2-factor $F_{1}$ and $F_{2}$, respectively, such that $e_{1}\not\in E(F_{1})$ and $e_{2}\not\in E(F_{2})$, because otherwise, $F_{1}$ together with $F_{2}$ would form a 2-factor of $G$ which does not intersect its principal 2-edge-cut, and so is not a Hamiltonian cycle. Therefore, without loss of generality, we can assume that $e_1$ is in every 2-factor of $G_1$. Let $F'$ be a 2-factor of $G_1$, and let $F''$ be a 2-factor of $G_2$ which contains $e_2$. These 2-factors do exist since $G$ is 2FH. Clearly, $F' \cup F'' \cup X - \{e_1,e_2\}$ is a 2-factor of $G$, and since $G$ is 2FH, $F' \cup F'' \cup X - \{e_1,e_2\}$ is a Hamiltonian cycle of $G$. Consequently, $F'$ and $F''$ are Hamiltonian cycles of $G_1$ and $G_2$, respectively.

($\Leftarrow$) For each $i\in\{1,2\}$, let $e_i=x_iy_i$. Consider a 2-factor $F$ of $G$. Due to the condition on $G_{1}$, any 2-factor of $G$ must contain the principal 2-edge-cut of $G$, because otherwise, this would create a 2-factor in $G_{1}$ not containing the edge $e_{1}$, a contradiction. Thus, $X\subset E(F)$. By our assumptions on $G_1$ and $G_2$, it follows that $(F \cap G_i) \cup \{e_i\}$ is a (connected) 2-factor of $G_i$, for each $i\in\{1,2\}$. Consequently, for each $i\in\{1,2\}$, $F \cap G_i$ is a Hamiltonian path of $G_i$ with endvertices $x_i$ and $y_i$, implying that $F$ is a Hamiltonian cycle of $G$.
\end{proof}

We first note that the graph $G_{2}$ in the above theorem is not necessarily 2FH, and can admit a 2-factor which is not a Hamiltonian cycle. In fact, there exist 2FH-graphs which are obtained by applying a 2-cut connection between graphs which are not both 2FH. Let $G_{1}$ be the cycle on four vertices, and let $e_{1}$ be one of its edges. Let $G_{2}$ be the graph obtained by applying a star product between two copies of $K_{4}$, and let $e_{2}$ be one of the edges of the principal 3-edge-cut. The graph $G_{1}$ is 2FH, whilst $G_{2}$ is not, as already stated above. However, $G_{1}(e_{1})\#G_{2}(e_{2})$ is still 2FH (see Figure \ref{figure 2fh2ec}).

\begin{figure}[h]
\centering
\includegraphics[scale=1]{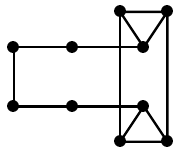}
\caption{A 2FH-graph arising from a 2-cut connection between two graphs one of which is not 2FH.}
\label{figure 2fh2ec}
\end{figure}

We further remark that, in the above theorem, the graphs $G_{1}$ and $G_{2}$ cannot both be just 2FH-graphs without any further properties, because a 2-cut connection between two copies of $K_{3,3}$ is not 2FH. Moreover, the condition on $G_{1}$ (that is, every 2-factor of $G_{1}$ is a Hamiltonian cycle containing $e_{1}$) cannot be relaxed to be equivalent to the condition on $G_{2}$ (that is, every 2-factor of $G_{2}$ containing $e_{2}$ is a Hamiltonian cycle). In fact, let $G_{1}$ and $G_{2}$ be two copies of the graph obtained by applying a star product between two copies of $K_{4}$, and let $e_{1}$ and $e_{2}$ be one of the edges of the principal 3-edge-cut of $G_{1}$ and $G_{2}$, respectively. For each $i\in\{1,2\}$, every 2-factor of $G_{i}$ containing $e_{i}$ is a Hamiltonian cycle of $G_{i}$, however, the graph $G_{1}(e_{1})\#G_{2}(e_{2})$, portrayed in Figure \ref{figure lastnot2fh}, is not 2FH.

\begin{figure}[h]
\centering
\includegraphics[scale=1]{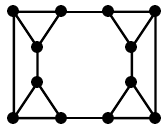}
\caption{A graph arising from a 2-cut connection which is not 2FH.}
\label{figure lastnot2fh}
\end{figure}

\section*{Acknowledgements}

The authors would like to thank Mari\'en Abreu, John Baptist Gauci, Domenico Labbate and Giuseppe Mazzuoccolo for useful discussions about the topic.

\end{document}